%
%
%
%
%
%

\documentclass[11pt]{amsart}

\usepackage[a4paper,hmargin=3.5cm,vmargin=4cm]{geometry}
\usepackage{amsfonts,amssymb,amscd,amstext,verbatim}


\usepackage{fancyhdr}
\pagestyle{fancy}
\fancyhf{}

\usepackage[utf8]{inputenc}
\usepackage{hyperref}
\usepackage{verbatim}
\input xy
\xyoption{all}


\usepackage{times}

\usepackage{enumerate}
\usepackage{titlesec}
\usepackage{mathrsfs}

\pretolerance=2000
\tolerance=3000


\headheight=13.03pt
\headsep 0.5cm
\topmargin 0.5cm
\textheight = 49\baselineskip
\textwidth 14cm
\oddsidemargin 1cm
\evensidemargin 1cm

\setlength{\parskip}{0.5em}

\titleformat{\section}
{\filcenter\bfseries\large} {\thesection{.}}{0.2cm}{}
\titleformat{\subsection}[runin]
{\bfseries} {\thesubsection{.}}{0.15cm}{}[.]
\titleformat{\subsubsection}[runin]
{\em}{\thesubsubsection{.}}{0.15cm}{}[.]

\usepackage[up,bf]{caption}


\newtheorem{theorem}{Theorem}[section]
\newtheorem{proposition}[theorem]{Proposition}

\newtheorem{lemma}[theorem]{Lemma}
\newtheorem{corollary}[theorem]{Corollary}

\theoremstyle{definition}
\newtheorem{definition}[theorem]{Definition}
\newtheorem{remark}[theorem]{Remark}

\newtheorem{example}[theorem]{Example}

\numberwithin{equation}{section}
\numberwithin{figure}{section}


\newcommand\Scal{\mathcal{S}}



\newcommand\Cscr{\mathscr{C}}

\newcommand\Jscr{\mathscr{J}}

\newcommand\Oscr{\mathscr{O}}


\newcommand\B{\mathbb{B}}
\newcommand\C{\mathbb{C}}
\newcommand\D{\overline{\mathbb D}}
\newcommand\CP{\mathbb{CP}}
\newcommand\HP{\mathbb{HP}}
\renewcommand\D{\mathbb D}

\renewcommand\H{\mathbb{H}}

\renewcommand\P{\mathbb{P}}
\newcommand\R{\mathbb{R}}

\newcommand\Z{\mathbb{Z}}


\newcommand\igot{\mathfrak{i}}
\newcommand\jgot{\mathfrak{j}}
\newcommand\kgot{\mathfrak{k}}
\renewcommand\igot{\mathfrak{i}}

\newcommand\ngot{\mathfrak{n}}

\newcommand\sgot{\mathfrak{s}}

%
%

%
%
\newcommand\E{\mathrm{e}}

\renewcommand\imath{\igot}

%
%
\newcommand\hra{\hookrightarrow}
\newcommand\lra{\longrightarrow}

%
%
\newcommand\wt{\widetilde}

%
%

\newcommand\cd{\overline{\D}}

\newcommand\Id{\mathrm{Id}}

\newcommand\Sym{\mathrm{Sym}}
\newcommand\End{\mathrm{End}}
\newcommand\SM{\mathrm{SM}}

\numberwithin{equation}{section}

%
%

\begin{document}

\fancyhead[LO]{The Calabi-Yau property of superminimal surfaces}
\fancyhead[RE]{F.\ Forstneri{\v c}} 
\fancyhead[RO,LE]{\thepage}

\thispagestyle{empty}

\vspace*{1cm}
\begin{center}
{\bf\LARGE The Calabi-Yau property of superminimal surfaces in self-dual Einstein four-manifolds}

\vspace*{0.5cm}

{\large\bf  Franc Forstneri{\v c}} 
\end{center}

\vspace*{1cm}

\begin{quote}
{\small
\noindent {\bf Abstract}\hspace*{0.1cm}
In this paper, we show that if $(X,g)$ is an oriented four dimensional Einstein manifold 
which is self-dual or anti-self-dual then superminimal surfaces in $X$ of appropriate spin 
enjoy the Calabi-Yau property, meaning that every immersed 
surface of this type from a bordered Riemann surface can be uniformly approximated by  complete 
superminimal surfaces with Jordan boundaries. The proof uses the theory of twistor spaces and the
Calabi-Yau property of holomorphic Legendrian curves in complex contact manifolds.

\vspace*{0.2cm}

\noindent{\bf Keywords}\hspace*{0.1cm}  superminimal surface, Einstein manifold, twistor space, 
complex contact manifold, holomorphic Legendrian curve 

\vspace*{0.1cm}

\noindent{\bf MSC (2020):}\hspace*{0.1cm}} 
Primary 53A10 - 53C25 - 53C28. Secondary 32E30 - 37J55.

\vspace*{0.1cm}
\noindent{\bf Date: \rm 29 June 2020}

%
%
%
%
\end{quote}

%
%
%
%
\section{Introduction}\label{sec:intro}
It has been known since the 1980s that four dimensional self-dual Einstein manifolds have a rich 
theory of {\em superminimal surfaces}. 
In the present paper we provide further evidence by showing that such surfaces 
enjoy the {\em Calabi-Yau property}; see Theorems \ref{th:main} and \ref{th:main2}.
The latter term was introduced in the recent paper by Alarc\'on et al.\ 
\cite[Definition 6.1]{AlarconForstnericLarusson2019X}. The motivation comes from the classical problem 
posed by Calabi in 1965 (see \cite[p.\ 170]{Calabi1965Conjecture} and \cite[p.\ 212]{Chern1966BAMS}) 
and in a more precise form by S.-T.\ Yau in 2000 (see \cite[p.\ 360]{Yau2000AMS} and 
\cite[p.\ 241]{Yau2000AJM}),
asking which open Riemann surfaces admit complete conformal minimal immersions with 
bounded images into Euclidean spaces $\R^n$, $n\ge 3$, and what is the possible boundary behaviour
of such surfaces. For the history of this subject and some recent developments, see the survey 
\cite{AlarconForstneric2019JAMS} and the papers \cite{AlarconDrinovecForstnericLopez2015PLMS,AlarconForstneric2020RMI,AlarconForstnericLopez2020MAMS}.

Superminimal surfaces form an interesting class of minimal surfaces in four dimensional Riemannian 
manifolds. Although this term was coined by Bryant in his study \cite{Bryant1982JDG} 
of such surfaces in the four-sphere $S^4$ and their relationship to holomorphic Legendrian 
curves in $\CP^3$, the Penrose twistor space of $S^4$, it soon became clear  through the 
work of Friedrich \cite{Friedrich1984,Friedrich1997} that this class of minimal surfaces was described 
geometrically already by Kommerell in his 1897 dissertation \cite{Kommerell1897} and his 1905 paper \cite{Kommerell1905}, 
and they were subsequently studied by Eisenhart \cite{Eisenhart1912} (1912), 
Bor$\mathring{\textrm u}$vka \cite{Boruvka1928ii,Boruvka1928i} (1928), Calabi \cite{Calabi1967},  
and Chern \cite{Chern1970A,Chern1970B} (1970), among others; see Sect.\ \ref{sec:SM}.
Unfortunately, at least three different definitions are used in the literature. We 
adopt the original geometric definition of Kommerell \cite{Kommerell1897} (see also 
Friedrich \cite[Sect. 1]{Friedrich1997}) and explain the role of {\em spin} in this context. 

Assume that $(X,g)$ is a Riemannian four-manifold and $M\subset X$ is 
a smoothly embedded surface with the induced conformal structure. 
(Our considerations, being of local nature, will also apply to immersed surfaces.) 
Then $TX|_M=TM\oplus N$ where $N=N(M)$ is the orthogonal normal 
bundle to $M$. A unit normal vector $n\in N_x$ at a point $x\in M$ determines a
{\em second fundamental form} $S_x(n):T_xM\to T_xM$, a self-adjoint linear operator
on the tangent space of $M$. For a fixed tangent vector $v\in T_x M$ we consider the closed curve 
\begin{equation}\label{eq:Ixv}
	I_x(v)=\bigl\{S_x(n)v: n\in N_x,\ |n|_g=1\bigr\} \subset T_xM.
\end{equation}
Suppose now that $M$ and $X$ are oriented, and coorient the normal bundle $N$ accordingly.

%
%
\begin{definition}\label{def:SM}
A smooth oriented embedded surface $M$ in an oriented Riemannian four-manifold
$(X,g)$ is {\em superminimal of positive (negative) spin} if for every point $x\in M$ and unit tangent vector 
$v\in T_xM$, the curve $I_x(v)\subset T_xM$ \eqref{eq:Ixv} is a circle centred at $0$ 
and the map $n \to S(n)v \in I_x(v)$ $(n\in N_x)$ 
is orientation preserving (resp.\ orientation reversing). The last condition 
is void at points $x\in M$ where the circle $I_x(v)$ reduces to $0\in T_xM$.
The analogous definition applies to a smoothly immersed oriented surface $f:M\to X$. 
\end{definition}

Every superminimal surface is a minimal surface; see Friedrich \cite[Proposition 3]{Friedrich1997}
and the discussion in Sect.\ \ref{sec:SM}. 
The converse is not true except in special cases, see Remark \ref{rem:biquadratic}.
The notion of spin, which is only implicitly present in Friedrich's discussion, is very important in 
the {\em Bryant correspondence} described in Theorem \ref{th:Friedrich}.

The surface $M$ in Definition \ref{def:SM} is endowed with the conformal structure which 
renders the given immersion $M\to X$ conformal. In the sequel we prefer to work with a fixed conformal 
structure on $M$ and consider only conformal immersions $M\to X$. Since $M$ is also oriented, 
it is a Riemann surface. We denote by $\SM^\pm(M,X)$ the spaces of smooth conformal superminimal
immersions of positive and negative spin, respectively, and set
\begin{equation}\label{eq:SM}
	\SM(M,X)=\SM^+(M,X) \cup \SM^-(M,X).
\end{equation}
The intersection $\SM^+(M,X) \cap \SM^-(M,X)$ of these two spaces consists of immersions
for which all circles $I_x(v)$ \eqref{eq:Ixv} reduce to points; such surfaces are minimal with 
identically vanishing normal curvature, hence totally geodesic (see \cite{Friedrich1997}).

Recall that a {\em (finite) bordered Riemann surface} is a domain of the form 
$M=R\setminus \bigcup_i \Delta_i$, where $R$ is a compact Riemann surface 
and $\Delta_i$ are finitely many compact pairwise disjoint discs with  
smooth boundaries $b\Delta_i$, diffeomorphic images of $\cd=\{z\in\C:|z|\le 1\}$.
Its closure $\overline M$ is a {\em compact bordered Riemann surface}.
The definition of superminimality clearly applies to smooth conformal
immersions $\overline M\to X$ and the notation \eqref{eq:SM} shall be used accordingly.
The following is our first main result; see also Theorem \ref{th:main2}.

%
%
\begin{theorem}
\label{th:main}
Let $(X,g)$ be an oriented four dimensional Einstein manifold whose Weyl tensor $W=W^++W^-$ 
satisfies $W^+=0$ or $W^-=0$. Given any bordered Riemann surface $M$ and a conformal
superminimal immersion $f_0\in \SM^\pm(\overline M,X)$ of class $\Cscr^3$ 
(with the respective choice of sign $\pm$), we can approximate $f_0$ uniformly on $\overline M$ 
by continuous maps $f:\overline M\to X$ such that $f:M\to X$ is a complete conformal superminimal immersion
in $\SM^\pm(M,X)$ and $f:bM\to X$ is a topological embedding.
\end{theorem}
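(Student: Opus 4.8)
The plan is to reduce the statement to the Calabi--Yau property for holomorphic Legendrian curves by passing to the twistor space. Since $(X,g)$ is Einstein and satisfies $W^+=0$ or $W^-=0$, the twistor space $\pi\colon Z\to X$ corresponding to the orientation that matches the prescribed spin is a complex manifold carrying a holomorphic contact structure: the (anti-)self-duality provides the integrable complex structure on $Z$ and the Einstein condition provides the holomorphic contact form, so $Z$ is a complex contact threefold with rational fibres $\CP^1$. The Bryant correspondence (Theorem \ref{th:Friedrich}; see also \cite{Bryant1982JDG,Friedrich1997}) then identifies conformal superminimal immersions $\overline M\to X$ of spin $\pm$ with holomorphic Legendrian immersions $\overline M\to Z$ whose image under $\pi$ recovers the surface. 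This is the mechanism that converts a curvature problem on $X$ into a problem about Legendrian curves in a complex contact manifold.

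First I would lift the given $f_0\in\SM^\pm(\overline M,X)$ to a holomorphic Legendrian immersion $F_0\colon\overline M\to Z$ with $\pi\circ F_0=f_0$; because the lift records the oriented tangent plane of the surface, it costs one derivative, so the hypothesis $f_0\in\Cscr^3$ guarantees that $F_0$ is of class $\Cscr^2$, which is ample regularity for the next step. I would then apply the Calabi--Yau property for holomorphic Legendrian curves in complex contact manifolds \cite{AlarconForstnericLarusson2019X} to $F_0$, obtaining a continuous map $F\colon\overline M\to Z$ that approximates $F_0$ as closely as desired uniformly on $\overline M$, is a complete holomorphic Legendrian immersion on $M$, and restricts to a topological embedding of $bM$. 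Setting $f=\pi\circ F$, the correspondence shows that $f$ is again a conformal superminimal immersion of spin $\pm$, and $f$ approximates $f_0=\pi\circ F_0$ uniformly, since $\pi$ is smooth and all maps in question have values in a fixed compact subset of $Z$ containing $F_0(\overline M)$ and $F(\overline M)$.

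It remains to transfer completeness and the boundary embedding from $F$ to $f$, and here the decisive structural point is that the holomorphic contact distribution $D\subset TZ$ is transverse to the fibres of $\pi$, so that $d\pi|_D\colon D\to TX$ is a fibrewise isomorphism. A Legendrian curve is tangent to $D$, so for every $\xi\in T_pM$ the vector $dF(\xi)$ lies in $D_{F(p)}$ and $df(\xi)=d\pi\bigl(dF(\xi)\bigr)$. Over the compact image $F(\overline M)\subset Z$ the isomorphism $d\pi|_D$ and its inverse are uniformly bounded, hence the induced metrics are comparable: $c_1\,F^{*}h\le f^{*}g\le c_2\,F^{*}h$ for a fixed Riemannian metric $h$ on $Z$ and constants $c_1,c_2>0$. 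Consequently completeness of $(M,F^{*}h)$ forces completeness of $(M,f^{*}g)$, so $f\colon M\to X$ is a complete superminimal immersion even though its image is bounded.

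The main obstacle is the boundary behaviour, because $\pi$ is not injective: the embedding of $F|_{bM}$ does not by itself make $f|_{bM}=\pi\circ F|_{bM}$ an embedding, since two distinct boundary points could be carried into a common $\CP^1$ fibre. I would resolve this by a general position argument. The boundary $bM$ is one--dimensional, the fibres of $\pi$ are two--dimensional, and $X$ is four--dimensional, so the condition that two distinct boundary points share a fibre has high codimension; thus a generic boundary map $bM\to X$ is injective. Moreover, since $d\pi|_D$ is an isomorphism, Legendrian deformations of $F$ supported near $bM$ project to essentially arbitrary deformations of $f$ near $bM$, so this genericity is attainable within the Legendrian class without disturbing the approximation or the completeness already achieved. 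A final such small deformation makes $\pi\circ F|_{bM}$ injective, and as $bM$ is compact this injective continuous map is automatically a topological embedding. This yields the desired complete conformal superminimal immersion $f\in\SM^\pm(M,X)$ with Jordan boundary approximating $f_0$, and the same scheme, carried out with the twistor space of the opposite orientation, handles Theorem \ref{th:main2}.
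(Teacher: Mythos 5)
Your strategy for the case of nonzero scalar curvature is essentially the paper's: lift via the Bryant correspondence, invoke the Calabi--Yau property for holomorphic Legendrian curves, and push the completeness down through the isometry $d\pi|_{\xi}\colon\xi\to TX$. However, there is a genuine gap: you assume throughout that the Einstein condition makes the horizontal distribution a \emph{holomorphic contact structure} on the twistor space. By Theorem \ref{th:Eells} (Salamon; Eells--Salamon), the Einstein condition only guarantees that $\xi^\pm$ is a holomorphic hyperplane subbundle of $TZ^\pm$; it is a contact bundle if and only if the scalar curvature is \emph{nonzero}. Theorem \ref{th:main} also covers Ricci-flat (anti-)self-dual manifolds --- flat space forms, $K3$ surfaces with their Calabi--Yau metrics, Enriques quotients, ALE gravitational instantons --- and for these your reduction to Legendrian curves in a complex contact manifold is unavailable. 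In that case $\xi$ is an involutive holomorphic hyperplane distribution, hence defines a holomorphic foliation of $Z$ by complex surfaces; the twistor lift $F_0$ lies in a single leaf, and one must instead apply the Calabi--Yau property of holomorphic curves in a complex manifold of dimension $>1$ within that leaf, then project. Your proof needs this second branch to cover the full hypothesis of the theorem.

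Two smaller points. First, the paper interposes a Mergelyan-type step: it first approximates the $\Cscr^2$ Legendrian immersion $F_0\colon\overline M\to Z$ by a holomorphic Legendrian immersion $F_1$ defined on an open neighbourhood of $\overline M$ (this is where the hypothesis $r\ge 3$ is actually consumed), and only then applies the Calabi--Yau theorem to $F_1$. If the Calabi--Yau result you invoke requires the initial curve to be holomorphic past the boundary, your direct application to $F_0$ skips a needed step. Second, your transversality count for making $\pi\circ F|_{bM}$ injective is the right idea, but you should verify that the required perturbations can be realised by Legendrian deformations without destroying completeness; the paper handles this by citing the general position theorem for Legendrian immersions together with the transversality argument worked out for the twistor map $\CP^3\to S^4$.
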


Recall that an immersion $f:M\to (X,g)$ is said to be {\em complete} if the Riemannian metric $f^*g$ 
induced by the immersion is a complete metric on $M$; equivalently, for any divergent path $\lambda:[0,1)\to M$ 
(i.e., such that $\lambda(t)$ leaves any compact subset of $M$ as $t\to 1$) the path $f\circ \lambda:[0,1)\to X$
has infinite length:  $\int_0^1 \big|\frac{d (f\circ \lambda(t))}{dt}\big|_g dt=+\infty$.

Note that our result is local in the sense that the complete conformal superminimal immersion
stays uniformly close to the given superminimal surface. Hence, if Theorem \ref{th:main} holds for a 
Riemannian manifold $X$ then it also holds for every open domain in $X$.

Recall (see Atiyah et al. \cite[p.\ 427]{AtiyahHitchinSinger1978}) that the Weyl tensor $W=W^++W^-$
is the conformally invariant part of the curvature tensor of a Riemannian four-manifold $(X,g)$, so it only 
depends on the conformal class of the metric. The manifold is called {\em self-dual} if $W^-=0$,  and
{\em anti-self-dual} if $W^+=0$. Note that $W=0$ if and only if the metric is conformally flat.
A Riemannian manifold $(X,g)$ is called an {\em Einstein manifold} if the Ricci tensor of $g$ is proportional 
to the metric, $Ric_g=kg$ for some constant $k\in \R$. 
The curvature tensor of $g$ then reduces to the constant scalar curvature (the trace of the Ricci curvature,
hence $4k$ when $\dim X=4$) and the Weyl tensor $W$ (see \cite[p.\ 427]{AtiyahHitchinSinger1978}). 
The Einstein condition is equivalent to the metric being a solution of the vacuum Einstein field equations
with a cosmological constant,  although the signature of the metric can be arbitrary in this setting, 
thus not being restricted to the four-dimensional Lorentzian manifolds studied in general relativity. 
Self-dual Einstein four-manifolds are important as {\em gravitational instantons} in quantum theories of gravity. 
A classical reference is the monograph \cite{Besse2008} by Besse.
The role of these conditions in Theorem \ref{th:main} will be clarified by Theorems \ref{th:Atiyah} and \ref{th:Eells}.

The analogue of Theorem \ref{th:main} also holds for bordered Riemann surfaces with countably 
many boundary curves; see Theorem \ref{th:main2}. Every such surface is an open domain 
\begin{equation}\label{eq:countable}
	M=R\setminus \bigcup_{i=0}^\infty D_i
\end{equation}
in a compact Riemann surface $R$, where $D_i\subset R$ are pairwise disjoint smoothly 
bounded closed discs. By the uniformisation theorem of He and Schramm \cite{HeSchramm1993}, 
every open Riemann surface of finite genus and having at most countably many ends is conformally 
equivalent to a surface of the form \eqref{eq:countable}, where $D_i$ lift to round discs or points in the 
universal covering surface of $R$. This gives the following corollary to Theorems \ref{th:main} and \ref{th:main2}.

%
%
\begin{corollary}\label{cor:main}
Every self-dual or anti-self-dual Einstein four-manifold contains a complete conformally immersed
superminimal surface with Jordan boundary parameterised by any given bordered Riemann surface
with finitely or countably many boundary curves. 

In particular, every open Riemann surface of finite genus 
and having at most countably many ends, none of which are point ends, is conformally equivalent to a 
complete conformal superminimal surface in any self-dual or anti-self-dual Einstein four-manifold.
\end{corollary}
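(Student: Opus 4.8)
The plan is to deduce the corollary from the two approximation theorems. To invoke Theorem \ref{th:main} (for finitely many boundary curves) or Theorem \ref{th:main2} (for countably many), one only needs, for the given bordered Riemann surface $M$, a single \emph{seed}: a conformal superminimal immersion $f_0\in\SM^\pm(\overline M,X)$ of class $\Cscr^3$ of one of the two spins. Once such an $f_0$ exists, the cited theorems furnish a complete conformal superminimal immersion $M\to X$ of the same spin with Jordan boundary, parameterised by $M$. Thus the whole content of the corollary is the existence of the seed immersion from an arbitrary (finitely or countably) bordered Riemann surface.

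I would produce the seed through the twistor correspondence. By Theorem \ref{th:Atiyah}, the twistor space $Z$ of the self-dual (resp.\ anti-self-dual) Einstein manifold $X$ is a complex contact threefold, with twistor projection $\pi\colon Z\to X$; and by the Bryant correspondence (Theorem \ref{th:Friedrich}), the projection $\pi\circ F$ of any holomorphic Legendrian immersion $F\colon\overline M\to Z$ is a conformal superminimal immersion of a definite spin, the one appropriate to the given (anti-)self-dual structure of $X$. Since the contact distribution is transverse to the fibres of $\pi$, a Legendrian immersion is automatically transverse to the twistor lines, so $\pi\circ F$ is indeed an immersion. Hence it suffices to construct one holomorphic Legendrian immersion $F_0\colon\overline M\to Z$.

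For this I would work in a Darboux chart $U\subset Z$ in which the contact form reads $\alpha=dz-y\,dx$ for holomorphic coordinates $(x,y,z)$, and seek $F_0=(x,y,z)\colon\overline M\to U$ with $x,y$ holomorphic on $\overline M$ and $z=\int y\,dx$. The Legendrian condition $F_0^*\alpha=0$ then holds by construction, and $F_0$ immerses as soon as $(dx,dy)$ has no common zero, which is generic. The only genuine constraint is that the primitive $z$ be single-valued, i.e.\ that the periods $\int_\gamma y\,dx$ vanish over a basis of $H_1(\overline M;\Z)$; for a finite bordered Riemann surface these are finitely many conditions, which I would kill by inserting enough free parameters into $x,y$ (a period-dominating spray) and applying the implicit function theorem, while keeping $x,y$ small so that the image stays in $U$. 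The countable case is handled exactly as in the proof of Theorem \ref{th:main2}.

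Finally, for the second assertion I would invoke the uniformisation theorem of He and Schramm \cite{HeSchramm1993}: any open Riemann surface of finite genus with at most countably many ends, none of which are point ends, is conformally equivalent to a surface $M=R\setminus\bigcup_{i=0}^\infty D_i$ of the form \eqref{eq:countable} in which every $D_i$ is a genuine closed disc, not a point, precisely because there are no point ends. Applying the first part to this $M$ completes the proof. The main obstacle is the seed construction rather than the uniformisation: one must simultaneously solve the period problem and keep the Legendrian curve inside a single Darboux chart, and, in the countable case, control the accumulation of the boundary curves, which is the technical core already packaged into Theorems \ref{th:main} and \ref{th:main2}.
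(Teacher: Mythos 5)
Your overall strategy is the one the paper intends: the corollary is obtained by uniformising the given surface via He--Schramm \cite{HeSchramm1993} to a domain of the form \eqref{eq:countable} (the hypothesis that no end is a point end guaranteeing that every $D_i$ is a genuine disc), and then feeding a seed immersion into Theorem \ref{th:main} or \ref{th:main2}. You are also right that the only real content beyond those theorems is the existence of the seed $f_0\in \SM^\pm(\overline M,X)$, which the paper leaves implicit; your construction of it --- a holomorphic Legendrian immersion in a Darboux chart of the twistor space, with the period problem $\int_\gamma y\,dx=0$ solved by a period-dominating spray, kept small via the anisotropic scaling that preserves $dz-y\,dx$ --- is exactly the mechanism of \cite{AlarconForstnericLopez2017CM} and \cite{AlarconForstneric2019IMRN} that the paper's other proofs rely on, and projecting it to $X$ gives the seed by Lemma \ref{lem:P1} and Theorem \ref{th:Friedrich}.

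Two corrections are needed. First, the contact structure on the twistor space is not given by Theorem \ref{th:Atiyah} (which only yields integrability of $J^\pm$); it is Theorem \ref{th:Eells} that makes $\xi^\pm$ a holomorphic hyperplane subbundle (Einstein condition) and a \emph{contact} bundle only when the scalar curvature is nonzero. Second, and consequently, your seed construction does not cover the scalar-flat case (e.g.\ flat tori, Ricci-flat K3 surfaces), which the corollary includes: there $\xi^\pm$ is involutive and there is no Darboux chart with $\alpha=dz-y\,dx$. In that case the fix is easier rather than harder --- the horizontal distribution foliates $Z^\pm$ by complex surfaces, and a seed is obtained from any holomorphic immersion of $\overline M$ into a coordinate ball of a leaf, with no period problem to solve --- and the completeness step is then supplied by the Calabi--Yau property of holomorphic curves in complex surfaces, as in the last paragraph of the proof of Theorem \ref{th:main}. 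With this case added, your argument is complete.
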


It is in general impossible to ensure completeness of a minimal surface 
at a point end unless $(X,g)$ is complete and the immersion $M\to X$ is proper at such end. 

The special case of Theorem \ref{th:main} when $X$ is the four-sphere $S^4$ is given by 
\cite[Corollary 1.10]{Forstneric2020Merg}; see also \cite[Theorem 7.5]{AlarconForstnericLarusson2019X}. 
Since the spherical metric is conformally flat, the Weyl tensor vanishes and 
Theorem \ref{th:main} applies to superminimal surfaces of both positive and negative spin in $S^4$.
The same holds for the hyperbolic $4$-space $H^4$; see Corollary \ref{cor:H4}. While $S^4$ 
admits plenty of supermininal surfaces of any given conformal type 
(see \cite[Corollary 7.3]{AlarconForstnericLarusson2019X}), every minimal 
surface in $H^4$ is uniformised by the disc $\D$ (see Corollary \ref{cor:H4}).

A natural question at this point is, how many Riemannian four-manifolds $(X,g)$ are there 
satisfying the conditions in Theorem \ref{th:main}? Among the 
complete ones with positive scalar curvature, there are not many. The classical Bonnet-Myers theorem 
(see Myers \cite{Myers1941} or do Carmo \cite[p.\ 200]{doCarmo1992}) states that if the Ricci curvature of an 
$n$-dimensional complete Riemannian manifold $(X,g)$ is bounded from below by a positive constant, 
then it has finite diameter and hence $X$ is compact. 
Further, a theorem of Friedrich and Kurke \cite{FriedrichKurke1982} from 1982
says that a compact self-dual Einstein four-manifold with positive scalar curvature is 
either isometric to $S^4$ or diffeomorphic to the complex projective plane $\CP^2$. 
Superminimal surfaces in $S^4$ and $\CP^2$ with their natural metrics have been studied extensively; 
see \cite{Bryant1982JDG,Gauduchon1987A,Gauduchon1987B,MontielUrbano1997,BoltonWoodward2000}.
Hitchin \cite{Hitchin1974} described in 1974 the topological type all four-dimensional compact 
self-dual Einstein manifolds with vanishing scalar curvature. He proved that such a space is either flat or a 
$K3$-surface, an Enriques surface, or the orbit space of an Enriques surface by an antiholomorphic involution. 
Conversely, it follows from the solution of the Calabi conjecture by S.-T.\ Yau \cite{Yau1977,Yau1978}
that every $K3$ surface admits a self-dual Einstein metric ($W^-=0$) with vanishing scalar curvature.
On the other hand, there are many self-dual Einstein manifolds with negative scalar curvature 
including all real and complex space forms. In particular, there is an infinite dimensional 
family of self-dual Einstein metrics with scalar curvature $-1$ on the unit ball $\B\subset \R^4$
having prescribed conformal structure of a suitable kind on the boundary sphere $S^3=b\B$;
see Graham and Lee \cite{GrahamLee1991}, 
Hitchin \cite{Hitchin1995}, and Biquard \cite{Biquard2002}.
Another construction of an infinite dimensional family of self-dual Einstein metrics 
was given by Donaldson and Fine \cite{DonaldsonFine2006} and Fine \cite{Fine2008}.
It was shown by Derdzinski \cite{Derdzinski1983} that a compact four-dimensional self-dual 
K\"ahler manifold is locally symmetric.

In the remainder of this introduction we outline the proof of Theorem \ref{th:main};
the details are given in Sect.\ \ref{sec:proof}. In sections \ref{sec:SM}--\ref{sec:twistor}
we provide a sufficiently complete account of the necessary ingredients from the theory of 
superminimal surfaces and twistor spaces to make the paper accessible to a wide audience. 
Several different definitions of superminimal surfaces are used in the literature, 
and hence statement which are formally the same need not be equivalent.  
We take care to present a coherent picture to an uninitiated reader 
with basic knowledge of complex analysis and Riemannian geometry. 

We shall use three key ingredients. The first two are provided by the {\em twistor theory} initiated by 
Penrose \cite{Penrose1967} in 1967. One of its main features from mathematical viewpoint is that it provides
harmonic maps from a given Riemann surface $M$ into a Riemannian four-manifold 
$(X,g)$ as projections of suitable holomorphic maps $M\to Z$ into the total space of the 
twistor bundle $\pi:Z\to X$. Although this idea is reminiscent of the Enneper-Weierstrass formula for 
minimal surfaces in flat Euclidean spaces (see Osserman \cite{Osserman1986}), it differs from it in certain key aspects. There are two twistor spaces $\pi^\pm : Z^\pm \to X$, 
reflecting the spin (see Sect.\ \ref{sec:twistor}). 
Their total spaces $Z^\pm$ carry natural almost complex structures $J^\pm$ (nonintegrable in general), 
and the fibres of $\pi^\pm$ are holomorphic rational curves in $Z^\pm$. 
The Levi-Civita  connection of $(X,g)$ determines a complex horizontal subbundle 
$\xi^\pm\subset TZ^\pm$ projecting by $d\pi^\pm$ isomorphically onto the 
tangent bundle of $X$. The key point of twistor theory pertaining to our paper is the 
{\em Bryant correspondence}; see Theorem \ref{th:Friedrich}.
This correspondence, discovered  by Bryant \cite{Bryant1982JDG} (1982) in the case when $X$ is the
four-sphere $S^4$ (whose twistor spaces $Z^\pm$ are the three dimensional complex projective space 
$\CP^3$, see Sect.\ \ref{sec:S4} for an elementary explanation), shows that superminimal surfaces in $X$ of
$\pm$ spin are precisely the projections of holomorphic horizontal curves in $Z^\pm$, i.e., 
curves tangent to the horizontal distribution $\xi^\pm$. 

The second ingredient is provided by a couple of classical integrability results.
According to Atiyah, Hitchin and Singer \cite[Theorem 4.1]{AtiyahHitchinSinger1978},  
the twistor space $(Z^\pm,J^\pm)$ of a smooth oriented Riemannian four-manifold $(X,g)$ is 
an integrable complex manifold if and only if the conformally invariant Weil tensor 
$W=W^++W^-$ of $g$  satisfies $W^+=0$ or $W^-=0$, respectively. Assuming that this 
holds, a result of Salamon \cite[Theorem 10.1]{Salamon1983} 
(see also Eells and Salamon \cite[Theorem 4.2]{EellsSalamon1985}) says that 
the horizontal bundle $\xi^\pm$ is a holomorphic hyperplane subbundle of $TZ^\pm$ if 
and only if $g$ is an Einstein metric, and in such case $\xi^\pm$ is a holomorphic contact bundle 
if and only if the scalar curvature of $g$ is nonzero. 

The third main ingredient is a recent result of Alarc\'on and the author 
\cite[Theorem 1.3]{AlarconForstneric2019IMRN} saying that holomorphic Legendrian immersions from 
bordered Riemann surfaces into any holomorphic contact manifold enjoy the Calabi-Yau property, i.e., 
the analogue of Theorem \ref{th:main} holds for such immersions. 
(See also \cite[Theorem 1.2]{AlarconForstnericLopez2017CM} for the standard complex contact structure 
on Euclidean spaces $\C^{2n+1}$, $n\ge 1$.) Analogous results hold for holomorphic immersions into 
any complex manifold of dimension $>1$, and for conformal minimal immersions into the flat Euclidean 
space $\R^n$ for any $n\ge 3$. We refer to the recent survey \cite{AlarconForstneric2019JAMS} for 
an account of these developments. The proof of Theorem \ref{th:main} is then completed and generalised 
to surfaces $M$ with countably many boundary curves in Sect.\ \ref{sec:proof}. 
In Sect.\ \ref{sec:S4} we take a closer look at the 
case when $X$ is the sphere $S^4$ or the hyperbolic space $H^4$.

%
%
%
%
\section{Superminimal surfaces in Riemannian four-manifolds}\label{sec:SM}

In this section we recall the notion of the indicatrix of a smooth surface in a smooth Riemannian 
four-manifold $(X,g)$ and the geometric definition of a superminimal surface. We follow the 
paper by Friedrich \cite{Friedrich1997} from 1997.  

Let $M\subset X$  be a smoothly embedded surface endowed with the induced metric. 
(Since our considerations in this section are local, they also apply to immersions $M\to X$.) 
The tangent bundle of $X$ splits along $M$ into the orthogonal direct sum 
$TX|_M=TM\oplus N$ where $N$ is the normal bundle of $M$ in $X$. 
Given a point $p\in M$ we let 
\[
	\Sym(T_pM)=\bigl\{A:T_pM\to T_pM: g(Au,v)=g(u,Av)\ \text{for all}\ u,v\in T_pM\bigr\}
\]
denote the three dimensional real vector space of linear symmetric self-maps of $T_pM$.
Fixing an orthonormal basis of $T_pM$, we identify $\Sym(T_pM)\cong \Sym(\R^2)$   
with the space of real symmetric $2\times 2$ matrices
and introduce the isometry $\Sym(T_pM) \stackrel{\cong}{\lra} \R^3$ by
\[ 
		\begin{pmatrix} a&b\\ b&c\end{pmatrix} \longmapsto 
		\left(\frac{a+c}{\sqrt 2},\sqrt{2} b, \frac{a-c}{\sqrt 2} \right).
\] 
Each unit normal vector $n\in N_p$, $|n|^2:=g(n,n)=1$, determines a second fundamental form
$S_p(n) : T_pM\to T_pM$ which belongs to $\Sym(T_pM)$. The unit normal vectors 
form a circle in the normal plane $N_p$ to $M$ at $p$, and the curve
\begin{equation}\label{eq:Ip2}
	I_p = \{S_p(n) : n\in N_p,\ |n|=1\} \subset  \Sym(T_pM) \cong\R^3 
\end{equation}
is called the {\em indicatrix} of $M$ at $p$. It was shown by Kommerell \cite{Kommerell1905}
that $I_p\subset \R^3$ is either a straight line segment which is symmetric around the origin $0\in \R^3$ 
(possibly reducing to $0$) or the intersection of a cylinder over an ellipse and a two plane. 
If $M$ is a minimal surface in $X$ then $I_p$ is a symmetric segment, 
an ellipse, or a circle; see Kommerell \cite{Kommerell1905} and Eisenhart \cite{Eisenhart1912}. 
For a fixed tangent vector $v\in T_pM$ we also consider the curve 
\begin{equation}\label{eq:Ipv}
	I_p(v)=\bigl\{S_p(n)v: n\in N_p,\ |n|=1\bigr\} \subset T_pM.
\end{equation}

%
%
\begin{definition}\label{def:SM2}
A smooth surface $M\subset X$ is {\em superminimal} if every curve $I_p(v)\subset T_pM$ 
$(p\in M,\ 0\ne v\in T_pM)$ is a circle with centre $0$ (which may reduce to the origin). 
The same definition applies to a conformally immersed surface $f:M\to X$. 
\end{definition}

\begin{remark}\label{rem:SM}
(A) A calculation in \cite[pp.\ 2-3]{Friedrich1997} shows that the indicatrix 
$I_p$ \eqref{eq:Ip2} of a superminimal surface $M\subset X$ at any point $p\in M$ 
is a circle in $\Sym(T_pM)\cong \R^3$ with centre $0$, and 
every superminimal surface is a minimal surface (see \cite[Proposition 3]{Friedrich1997}).
The converse fails in general, but see Remark \ref{rem:biquadratic} for some special cases.

(B) The above definition does not require orientability. 
If $M$ and $X$ are oriented, then we can introduce {\em superminimal surfaces of positive or negative
spin} by looking at the direction of rotation of the point $S_p(n)v\in I_p(v)\subset T_pM$ 
as the unit normal vector $n\in N_p$ traces the unit circle in a given direction.
This gives the two spaces $\SM^\pm(M,X)$ in Definition \ref{def:SM} which get interchanged
under the reversal of the orientation on $X$.

(C) The class of superminimal surfaces is invariant under isometries of $(X,g)$.
\qed\end{remark}

Superminimal surfaces have been studied by many authors; see 
in particular Kommerell \cite{Kommerell1905}, Eisenhart \cite{Eisenhart1912},  
Bor$\mathring{\textrm u}$vka \cite{Boruvka1928ii,Boruvka1928i}, Calabi \cite{Calabi1967}, 
Chern \cite{Chern1970A,Chern1970B}, Bryant \cite{Bryant1982JDG}, 
Friedrich \cite{Friedrich1984,Friedrich1997}, Eells and Salamon \cite{EellsSalamon1985}, 
Gauduchon \cite{Gauduchon1987A,Gauduchon1987B}, 
Wood \cite{Wood1987}, Montiel and Urbano \cite{MontielUrbano1997}, 
Bolton and Woodward \cite{BoltonWoodward2000}, Shen \cite{Shen1993,Shen1996}, 
and Baird and Wood \cite{BairdWood2003}. 
A recent contribution to the theory of superminimal surfaces in $S^4$ was made in
\cite[Sect.\ 7]{AlarconForstnericLarusson2019X}.

%
%
%
%
\section{Almost hermitian structures on $\R^4$ and quaternions}\label{sec:H}

In this section we recall some basic facts about linear almost hermitian structures on $\R^4$
and their representation by quaternionic multiplication. This material is standard 
(see e.g.\ \cite{AtiyahHitchinSinger1978,EellsSalamon1985}),
except for Lemma \ref{lem:complexstructures} which will be used in Sect.\ \ref{sec:S4}. 

Let $\langle\cdotp,\cdotp\rangle$ stand for the Euclidean inner product on $\R^4$. 
We denote  by $\Jscr^\pm(\R^4)$ the space of {\em almost hermitian structures on $\R^4$}, 
i.e., linear operators $J:\R^4\to\R^4$ satisfying the following three conditions:
\begin{enumerate}[\rm (a)]
\item $J^2=-\Id$,
\item $\langle Jx,Jy\rangle = \langle x, y\rangle$ for all $x,y\in\R^4$, and
\item letting $\omega(x,y)=\langle Jx,y\rangle$ denote the fundamental form of $J$, 
we have that $\omega\wedge\omega=\pm \Omega$ where $\Omega$ is the standard volume form 
on $\R^4$ with its canonical orientation.
\end{enumerate}
Condition (a) lets us identify $\R^4$ with $\C^2$ such that $J$ corresponds to the multiplication
by $\igot$ on $\C^2$; any such linear operator is called a (linear) {\em almost complex structure} on $\R^4$.
The second condition means that $J$ is compatible with the inner product on $\R^4$, hence
the word {\em almost hermitian}. The third condition specifies the orientation of $J$. Note that 
\[	
	\Jscr^+(\R^4)\cup \Jscr^-(\R^4)\subset SO(4).
\] 
Any choice of positively oriented orthonormal basis $e=(e_1,e_2,e_3,e_4)$ of $\R^4$ 
determines a pair of almost hermitian structures $J_e^\pm \in \Jscr^\pm(\R^4)$ by 
\begin{equation}\label{eq:Je}
	J^\pm_e (e_1)= e_2,\quad\ J^\pm_e (e_3)=\pm e_4.
\end{equation}
If $e'=(e'_1,e'_2,e'_3,e'_4)$ is another orthonormal basis in the same orientation class, 
there is a unique $A\in SO(4)$ mapping $e_i$ to $e'_i$ for $i=1,\ldots,4$, and hence 
\[ 
	J^\pm_e=A^{-1}\circ J_{e'}^\pm\circ A.
\] 
This shows that for any fixed $J\in \Jscr^+(\R^4)$, conjugation $A \mapsto A^{-1}\circ J\circ A$
by orthogonal rotations $A\in SO(4)$ acts transitively on $\Jscr^+(\R^4)$; the corresponding 
property also holds for $\Jscr^-(\R^4)$. 
The stabiliser of this action is the unitary group $U(2)$, the group of orthogonal rotations 
preserving the given structure $J$, and $\Jscr^\pm(\R^4)$ can be identified with the quotient 
$SO(4)/U(2)\cong S^2$. Conjugation by an element $A\in O(4)$ of the orthogonal group with $\det A=-1$
interchanges $\Jscr^+(\R^4)$ and $\Jscr^-(\R^4)$, and $O(4)/U(2)\cong \Jscr^+(\R^4)\cup \Jscr^-(\R^4)$. 
For instance, the two structures in \eqref{eq:Je} are interchanged by the 
orientation reversing map $A\in O(4)$ given by $Ae_1=e_1,\ Ae_2=e_2,\ Ae_3=e_4,\ Ae_4=e_3$.
Note however that the structures $\pm J$ belong to the same space $\Jscr^\pm(\R^4)$. 

It is classical that every $A\in SO(4)$ is represented by a pair of 
rotations for angles $\alpha,\beta\in (-\pi,+\pi]$ in orthogonal cooriented $2$-planes 
$\Sigma\oplus \Sigma^\perp=\R^4$. (Such pair of planes is uniquely determined by $A$ if and only if 
$|\alpha|\ne |\beta|$.) The rotation $A$ is said to be {\em left isoclinic} if $\alpha=\beta$ 
(it rotates for the same angle in the same direction on both planes), and {\em right isoclinic} if
$\alpha=-\beta$ (it rotates for the same angle but in the opposite directions).
Thus, elements of $\Jscr^+(\R^4)$ are precisely the left isoclinic rotations for the angle $\pi/2$,
while those in $\Jscr^-(\R^4)$ are the right isoclinic rotations for the angle $\pi/2$.

%
%
Here is another interpretation of the spaces $\Jscr^\pm(\R^4)$;
see Atiyah et al.\ \cite[Sect.\ 1]{AtiyahHitchinSinger1978} or Eells and Salamon 
\cite[Sect.\ 2]{EellsSalamon1985}. Let $\Lambda^2(\R^4)$ denote the second exterior
power of $\R^4$. For any oriented orthonormal basis $e_1,\ldots, e_4$ of $\R^4$
the vectors $e_i\wedge e_j$ for $1\le i<j\le 4$ form an orthonormal basis of $\Lambda^2(\R^4)$,
so $\dim_\R \Lambda^2(\R^4)=6$. The Hodge star endomorphism  $*:\Lambda^2(\R^4)\to \Lambda^2(\R^4)$ 
is defined by $\alpha\wedge *\beta=\langle \alpha,\beta\rangle \Omega \in \Lambda^4(\R^4)$. 
We have that $*^2=1$, and the $\pm1$ eigenspace $\Lambda^2_\pm(\R^4)$ of $*$ 
has an oriented orthonormal basis
\begin{equation}\label{eq:basisofLambda2}
	e_1\wedge e_2 \pm e_3\wedge e_4,\quad e_1\wedge e_3 \pm e_4\wedge e_2,
	\quad e_1\wedge e_4 \pm e_2\wedge e_3.
\end{equation}
The Euclidean metric lets us identify $\R^4$ with its dual $(\R^4)^*$, which gives the inclusion 
\begin{equation}\label{eq:LEnd}
	\Lambda^2(\R^4) \hra \R^4\otimes \R^4\cong (\R^4)^*\otimes  \R^4 = \End(\R^4) \cong GL_4(\R).
\end{equation}
Under this identification of $\Lambda^2(\R^4)$ with a subset of $\End(\R^4)$, we have that
\begin{equation}\label{eq:Spm}
	\Jscr^\pm(\R^4) = S(\Lambda^2_\pm(\R^4)) := 
	\text{the unit sphere of}\ \Lambda^2_\pm(\R^4)\cong\R^3.
\end{equation}
For example, the vector $e=e_1\wedge e_2 + e_3\wedge e_4\in \Lambda^2_+(\R^4)$ 
is sent under the first inclusion in \eqref{eq:LEnd} to 
$e_1\otimes e_2-e_2\otimes e_1+e_3\otimes e_4-e_4\otimes e_3\in \R^4\otimes \R^4$, 
and under the second isomorphism in \eqref{eq:LEnd} to the almost hermitian structure 
given by \eqref{eq:Je}:
\[
	J_e=e^*_1\otimes e_2-e^*_2\otimes e_1+e^*_3\otimes e_4-e^*_4\otimes e_3 \in \Jscr^+(\R^4).
\]
We adopt the following convention regarding the orientations. (It is difficult 
to find this essential point in the construction of twistor spaces spelled out in the literature.)

%
%
\noindent \bf Orientation on $\Jscr^\pm(\R^4)$. \label{p:convention} \rm 
Let $e=(e_1,e_2,e_3,e_4)$ be a positively oriented orthonormal basis of $\R^4$, and  
let the spaces $\Lambda^2_\pm(\R^4)\cong\R^3$ be oriented by the pair of bases 
\eqref{eq:basisofLambda2}. We endow $\Jscr^+(\R^4)= S(\Lambda^2_+(\R^4))$ 
with the {\em outward orientation} of the unit $2$-sphere in $\Lambda^2_+(\R^4)\cong\R^3$, while 
$\Jscr^-(\R^4)= S(\Lambda^2_-(\R^4))$ is given the {\em inward orientation}.

Letting $\overline\R^4$ denote $\R^4$ with the opposite orientation, 
it is easily checked that we have {\em orientation preserving} isometric isomorphisms  
\[ 
	\Jscr^\pm(\R^4)=S(\Lambda^2_\pm(\R^4)) \ \stackrel{\cong}{\lra}\ 
	S(\Lambda^2_\mp(\overline\R^4))=\Jscr^\mp(\overline\R^4).
\] 

An oriented $2$-plane $\Sigma\subset \R^4$ determines a pair of almost hermitian structures
$J^\pm_\Sigma \in \Jscr^\pm(\R^4)$ which rotate for $\pi/2$ in the positive direction 
on $\Sigma$ and for $\pm \pi/2$ on its cooriented orthogonal complement $\Sigma^\perp$.
Denoting by $G_2(\R^4)$ the Grassmann manifold of oriented $2$-planes 
in $\R^4$, we have that (cf.\ \cite[p.\ 595]{EellsSalamon1985}) 
\begin{equation}\label{eq:G2}
		G_2(\R^4) \cong S(\Lambda^2_+(\R^4)) \times S(\Lambda^2_-(\R^4)) 
		= \Jscr^+(\R^4) \times \Jscr^-(\R^4).
\end{equation}

%
%
Almost hermitian structures on $\R^4$ can be represented by quaternionic multiplication.
Let $\H$ denote the field of quaternions. An element of $\H$ is written uniquely as
\begin{equation}\label{eq:qx}
	q= x_1+x_2\igot  + x_3\jgot  + x_4 \kgot  = z_1+z_2\jgot, 
\end{equation}
where $(x_1,x_2,x_3,x_4)\in\R^4$, $z_1=x_1+x_2 \igot \in\C,\ z_2=x_3+x_4\igot \in \C$, and  
$\igot,\jgot,\kgot$ are the quaternionic units satisfying 
\[
	\igot^2=\jgot^2=\kgot^2=-1,\quad \igot\jgot=-\jgot\igot=\kgot,\quad
	\jgot\kgot=-\kgot\jgot=\igot,\quad \kgot\igot=-\igot\kgot=\jgot.
\]
We identify $\R^4$ with $\H$ using $1,\igot,\jgot,\kgot$ as the standard positively oriented orthonormal basis.
(Some authors write complex coefficients on the right in \eqref{eq:qx}; due to noncommutativity 
this makes for certain differences in the constructions  and formulas.) Recall that 
\[
	\bar q=x_1-x_2\igot  - x_3\jgot  - x_4 \kgot,\quad 
	q\bar q=|q|^2=\sum_{i=1}^4 x_i^2, \quad 
	q^{-1}=\frac{\bar q}{|q|^2}\ \text{if}\ q \ne 0,\quad \overline{pq}=\bar q \bar p. 
\]
By $\H_0$ we denote the real $3$-dimensional subspace of purely imaginary quaternions:
\begin{equation}\label{eq:H0}
	\H_0=\{q=x_2\igot  + x_3\jgot  + x_4 \kgot: x_2,x_3,x_4\in \R\} \cong \R^3.
\end{equation}
We also introduce the spheres of unit quaternions and imaginary unit quaternions:
\begin{equation}\label{eq:spheres}	
	\Scal^3:=\{q\in \H:|q|=1\} \cong S^3,\qquad 	\Scal^2=\{q\in \H_0:|q|=1\}\cong S^2. 
\end{equation}
We take $\igot,\jgot,\kgot$ as a positive orthonormal basis of $\H_0$ and orient 
the spheres $\Scal^2\subset \H_0$ and $\Scal^3\subset \H$ by the respective outward 
normal vector field. In particular, the vectors $\jgot,\kgot$ are a positively oriented orthonormal 
basis of the tangent space $T_\igot \Scal^2$. 

Elements of $\Jscr^+(\R^4)$ and $\Jscr^-(\R^4)$ then correspond to left and right multiplications, 
respectively, on $\H\cong \R^4$ by imaginary unit quaternions $q\in \Scal^2$. 
To see this, note that every $J\in \Jscr^+(\R^4)$ is uniquely 
determined by its value $q=J(1)$ on the first basis vector; this value is orthogonal to 
$1$ and of unit length, hence an element of the unit sphere $\Scal^2\subset \H_0$ inside the 
$3$-space of imaginary quaternions \eqref{eq:spheres}.
The pair $1,q$ spans a $2$-plane $\Sigma\subset\H$ whose orthogonal complement 
$\Sigma^\perp$ is contained in the hyperplane $\H_0$. 
The left multiplication by $q$ on $\H$ then amounts to a rotation for $\pi/2$
in the positive direction on $\Sigma^\perp$, while the right multiplication by $q$ yields 
a rotation for $\pi/2$ in the negative direction on $\Sigma^\perp$.
The left multiplication by $\igot$ determines the {\em standard structure} 
$J_\igot(1)=\igot$, $J_\igot(\jgot)=\kgot$.

The following lemma will be used in Sect.\ \ref{sec:S4} to provide an elementary explanation of the fact 
that $\CP^3$ is the twistor space of $S^4$. The analogous result holds for $\Jscr^-(\R^4)$ as seen 
by using the right multiplication on $\H$ by nonzero quaternions.

%
%
\begin{lemma}\label{lem:complexstructures}
For every $q\in \H\setminus \{0\}$ the left multiplication by $\bar q$ on $\H$
uniquely determines an almost hermitian structure $J_q\in \Jscr^+(\R^4)$ making the 
following diagram commute:
\[ 
	\xymatrix{
	\R^4 \ar[r]^{\cong}\ar[d]_{J_\igot} & \H  \ar[r]^{\bar q\,\cdotp} \ar[d]_{\igot\,\cdotp} 
	& \H \ar[d]^{\bar q \igot \bar q^{-1}\cdotp} \ar[r]^{\cong} & \R^4 \ar[d]^{J_q}  \\ 
	\R^4 \ar[r]^{\cong} & \H \ar[r]^{\bar q\,\cdotp} & \H \ar[r]^{\cong} & \R^4 
	} 
\]
The map $\H\setminus \{0\} \to \Jscr^+(\R^4)$ given by $q\mapsto J_q$ is 
equivalent to the canonical projection $\H\setminus \{0\} = \C^2_*\to\CP^1$ under an 
orientation preserving diffeomorphism $\Jscr^+(\R^4)\to \CP^1$. 
\end{lemma}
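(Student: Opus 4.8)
The plan is to read off from the commuting diagram that $J_q$ is forced to be the left multiplication $L_{\bar q\igot\bar q^{-1}}$ by the quaternion $\bar q\igot\bar q^{-1}$, and then to analyse the resulting map $q\mapsto J_q$. Indeed, the middle square commutes precisely because left multiplications compose by $L_a\circ L_b=L_{ab}$, so both $L_{\bar q}\circ L_\igot$ and $L_{\bar q\igot\bar q^{-1}}\circ L_{\bar q}$ equal $L_{\bar q\igot}$; the outer identifications $\R^4\cong\H$ then carry $J_\igot$ to $L_\igot$ and $J_q$ to $L_{\bar q\igot\bar q^{-1}}$. Since $x\mapsto\bar q\,x\,\bar q^{-1}$ is an inner automorphism of $\H$, it fixes the real part and preserves norms, so $\bar q\igot\bar q^{-1}$ is again a unit imaginary quaternion; by the description of $\Jscr^+(\R^4)$ as the left multiplications by elements of $\Scal^2$, this gives $J_q\in\Jscr^+(\R^4)$ even though $L_{\bar q}$ itself is not orthogonal when $|q|\neq1$.

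Next I would show that $q\mapsto J_q$ descends to an isomorphism on $\CP^1$. Writing $q=z_1+z_2\jgot$, left multiplication by $\lambda\in\C^*$ is exactly complex scalar multiplication on $(z_1,z_2)$, so the fibres of $\C^2_*\to\CP^1$ are the left $\C^*$-orbits; and $\overline{\lambda q}\,\igot\,\overline{\lambda q}^{-1}=\bar q\,\bar\lambda\igot\bar\lambda^{-1}\,\bar q^{-1}=\bar q\igot\bar q^{-1}$ because $\bar\lambda\in\C$ commutes with $\igot$. Hence $J_{\lambda q}=J_q$ and the map factors through $\CP^1\to\Jscr^+(\R^4)$. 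For injectivity, $J_q=J_{q'}$ forces $\bar{q'}^{-1}\bar q$ to centralise $\igot$, and the centraliser of $\igot$ in $\H$ is exactly $\C=\span\{1,\igot\}$, so $q,q'$ lie in a common left $\C^*$-orbit. Surjectivity follows from the transitivity of the conjugation action of $\Scal^3$ on $\Scal^2$ (the double cover $\Scal^3\to SO(3)$): any $p\in\Scal^2$ is $u\igot u^{-1}$, whence $p=\bar q\igot\bar q^{-1}$ with $q=\bar u$. As a smooth bijection between the compact surfaces $\CP^1$ and $\Scal^2\cong\Jscr^+(\R^4)$ with everywhere nonsingular differential, it is a diffeomorphism.

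The remaining, and most delicate, point is orientation, which I would settle by one differential computation at the base point $q=1$ (where $J_1=L_\igot$ corresponds to $\igot\in\Scal^2$). In the affine chart $q=1+w\jgot$, with $w=z_2/z_1$ the holomorphic coordinate on $\CP^1$, a direct expansion gives
\[
	\bar q\igot\bar q^{-1}=\frac{(1-w\jgot)\,\igot\,(1+w\jgot)}{1+|w|^2}
	=\frac{(1-|w|^2)\igot-2t\jgot+2s\kgot}{1+|w|^2},\quad w=s+t\igot,
\]
so the differential at $w=0$ sends $\partial_s\mapsto2\kgot$ and $\partial_t\mapsto-2\jgot$. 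Since $(\jgot,\kgot)$ is the positive frame of $T_\igot\Scal^2$, this differential has positive determinant, i.e.\ it is orientation preserving from the complex orientation of $\CP^1$ to the outward orientation of $\Scal^2$. To match this with the orientation the paper puts on $\Jscr^+(\R^4)=S(\Lambda^2_+(\R^4))$, I would record that $p\mapsto L_p$ sends the positive basis $(\igot,\jgot,\kgot)$ of $\H_0$ to $(e_1\wedge e_2+e_3\wedge e_4,\ e_1\wedge e_3+e_4\wedge e_2,\ e_1\wedge e_4+e_2\wedge e_3)$, which is exactly the oriented basis \eqref{eq:basisofLambda2}; hence $\Scal^2\cong\Jscr^+(\R^4)$ is orientation preserving and the composite $\Jscr^+(\R^4)\to\CP^1$ is as well. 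The main obstacle is precisely this orientation bookkeeping: keeping straight the left-versus-right multiplication, the bar on $q$, and the three separate conventions (the complex orientation of $\CP^1$, the outward orientation of $\Scal^2\subset\H_0$, and the outward orientation of $S(\Lambda^2_+)$) so that all signs line up.
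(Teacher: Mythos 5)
Your proof is correct and follows essentially the same lines as the paper's: identify $J_q$ with left multiplication by the unit imaginary quaternion $\bar q\, \igot\, \bar q^{-1}=q^{-1}\igot q$, show that the fibres of $q\mapsto J_q$ are the punctured complex lines $\C^* q$, and settle the orientation by a differential computation at a single base point together with the identification of $(\igot,\jgot,\kgot)$ with the oriented basis \eqref{eq:basisofLambda2} of $\Lambda^2_+(\R^4)$. Your explicit verification of that last identification is a welcome addition, since the paper only asserts that it is orientation preserving. Where you genuinely diverge is in how the induced map $\CP^1\to\Scal^2$ is shown to be a diffeomorphism: the paper computes $d\Phi_q(\jgot q)=2\bar q\,\kgot q$ and $d\Phi_q(\kgot q)=-2\bar q\,\jgot q$ for \emph{every} $q\in\Scal^3$, getting a submersion globally, whereas you prove bijectivity algebraically (centraliser of $\igot$ in $\H$ equals $\C$, plus transitivity of the conjugation action of $\Scal^3$ on $\Scal^2$).

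The one loose end is precisely there: you conclude with ``a smooth bijection \dots\ with everywhere nonsingular differential is a diffeomorphism,'' but you have only computed the differential at the base point $q=1$. A smooth bijection between compact surfaces need not be a diffeomorphism without nonsingularity everywhere, so this claim requires justification. It is easily supplied: either repeat the paper's computation of $d\Phi_q$ at an arbitrary $q\in\Scal^3$, or note that your map intertwines the right action of $\Scal^3$ on $\H\setminus\{0\}$ (which descends to $\CP^1$ through $PU(2)$, since right multiplication commutes with left multiplication by $\C$) with the conjugation action on $\Scal^2$ (acting through $SO(3)$); both actions are transitive and by orientation-preserving diffeomorphisms, so invertibility and the sign of the Jacobian at one point propagate to all points. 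With that one line added, your argument is complete.
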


\begin{proof}
From $q\bar q=|q|^2$ we see that $\bar q^{-1}=q/|q|^2$ and hence 
\[	
	\bar q \, \igot \bar q^{-1}=\frac{\bar q}{|q|^2}\igot q=q^{-1}\igot q \in \Scal^3.
\] 
For any $q_1,q_2\in\H$ we have that $\overline{q_1q_2}=\bar q_2\bar q_1$ and hence 
\[
	\overline{\bar q \, \igot \bar q^{-1}} = q^{-1} (-\imath) q = - q^{-1} \, \imath q,
\]
so $q^{-1}\igot q \in \Scal^2$ is a purely imaginary unit quaternion. It follows that the left product by
$q^{-1}\igot q$ on $\H$ determines an almost hermitian structure $J_q\in \Jscr^+(\R^4)$. 

Let us consider more closely the map 
\[ 
	\Phi:\H\setminus \{0\}\to \Scal^2,\qquad \Phi(q)=q^{-1} \, \imath q.
\] 
We have that $\Phi(q_1)=\Phi(q_2)$ if and only if
\[
	q_1^{-1}\imath q_1=q_2^{-1}\imath q_2 \ \Longleftrightarrow \ 
	(q_2q_1^{-1}) \imath = \imath (q_2 q_1^{-1}) \ \Longleftrightarrow\ q_2 q_1^{-1}\in \C^*,
\]
so the fibres of $\Phi$ are the punctured complex lines $\C^*q$ for $q\in \H\setminus\{0\}$. 

We claim that $\Phi$ is a submersion. Since $\Phi$ is constant on the  lines $\C^*q$,
it suffices to show that $\Phi:\Scal^3\to \Scal^2$ is a submersion.  Fix $q\in \Scal^3$.
For any $q'\in\H$ we have that 
\[
	d\Phi_q(q')=\frac{d}{dt}\Big|_{t=0} \Phi(q+tq')=
	\frac{d}{dt}\Big|_{t=0} \overline{(q+tq')}\, \igot (q+tq') 
	= \bar q' \igot q + \bar q\, \igot q'.
\]
In particular, 
\[ 
	d\Phi_q(\jgot q)= 2\bar q\, \kgot q,\qquad d\Phi_q(\kgot q)= -2\bar q\, \jgot q.
\] 
These two vector are clearly $\R$-linearly independent, so $d\Phi_q:T_q\Scal^3\to T_{\Phi(q)}\Scal^2$ 
has rank $2$ at each point. For $q=\igot$ we get that $\Phi(\imath)=\imath$ and $d\Phi_\igot(\jgot) = 2\jgot$, $d\Phi_\igot(\kgot) = 2\kgot$. Note that $(\jgot,\kgot)$ is a positively oriented orthonormal basis 
of both $T_\igot \Scal^2$ and $T_{[1:0]}\CP^1$, the tangent space at the point $[1:0]$ to the 
projective line consisting of complex lines in $\H=\C^2$, with $[1:0]=\C\times \{0\}$. 
It follows that $\Phi=h\circ \phi$ where $\phi:\C^2_*\to\CP^1$ is the canonical projection and 
$h:\CP^1\to \Scal^2$ is an injective orientation preserving local diffeomorphism, hence an
orientation preserving diffeomorphism onto $\Scal^2$. (Surjectivity is easily seen by an explicit calculation.)
Finally, we identify $\Jscr^+ (\R^4)$ with $\Scal^2$ acting on $\R^4=\H$ by left multiplication;
this identification is orientation preserving as well.  

Note that the map $\Phi:S^3\cong \Scal^3 \to\Scal^2\cong S^2$ is the Hopf fibration
with circle fibres $\{\E^{\imath t}q:t\in\R\}\cong S^1$, $q\in \Scal^3$. 
\end{proof}

%
%
%
%
\section{Twistor bundles and the Bryant correspondence}\label{sec:twistor}

In 1967, Penrose \cite{Penrose1967} introduced a new {\em twistor theory} with an immediate
goal of studying representation theory of the $15$-parameter Lie group of conformal coordinate
transformations on four-dimensional Minkowski space leaving the light-cone invariant. 
(The mathematical ideas in Penrose's paper are in close relation to those developed in the 
notes \cite{Veblen1955} of the seminar conducted by Oswald Veblen and John von Neumann during 1935--1936.) 
One of his aims was to offer a possible path to understand quantum gravity; 
see Penrose and MacCallum \cite{PenroseMacCallum1973}. Penrose also promoted the idea that twistor spaces 
should be the basic arena for physics from which space-time itself should emerge. 

Mathematically, twistor theory connects four-dimensional Riemannian geometry to
three-dimensional complex analysis. A basic example is the complex projective three-space $\CP^3$ 
as the twistor space of $S^4$ with the spherical metric (see Penrose \cite[Sect.\ VI]{Penrose1967},
Bryant \cite{Bryant1982JDG}, and Sect.\  \ref{sec:S4}). Physically it 
is the space of massless particles with spin. 
Twistor theory evolved into a branch of mathematics and theoretical physics  
with applications to differential and integral geometry, nonlinear differential equations and 
representation theory and in physics to relativity and quantum field theory.
%
%
For the theory of twistor spaces, see in particular the 
papers by Atiyah, Hitchin and Singer \cite{AtiyahHitchinSinger1978}, Friedrich \cite{Friedrich1984},
Eells and Salamon \cite{EellsSalamon1985}, Gauduchon \cite{Gauduchon1987A,Gauduchon1987B},
the monographs by Ward and Wells \cite{WardWells1990} and Baird and Wood \cite{BairdWood2003},
and the recent survey by Sergeev \cite{Sergeev2018}. Twistor theory also exists for certain Riemannian 
manifolds of real dimension $4n$ for $n>1$, in particular for quaternion-K\"ahler manifolds
(see Salamon \cite{Salamon1982}, LeBrun and Salamon \cite{LeBrunSalamon1994IM}, 
and LeBrun \cite{LeBrun1995IJM}).

Associated to an oriented Riemannian four-manifold $(X,g)$ is a pair of almost hermitian fibre bundles
with fibre $\CP^1$, 
\[ 	
	Z^\pm(X) := \Jscr^\pm (TX)=S(\Lambda^2_\pm(TX))\ \stackrel{\pi^\pm}{\lra} X,
\] 
the {\em positive} and the {\em negative twistor bundle} of $X$. The fibre over any point $x\in X$ equals
\[
	(\pi^\pm)^{-1}(x) = \Jscr^\pm(T_x X) = S(\Lambda^2_\pm(T_x X)) \cong \CP^1,
\] 
the space of positive or negative almost hermitian structures on $T_x X\cong \R^4$.
(The second equality uses the identification \eqref{eq:Spm}.) 
The complex structure on $\Jscr^\pm(T_x X)\cong S^2$ is specified by the choice of orientation 
in Sect.\ \ref{sec:H}, p.\ \pageref{p:convention}.
A local trivialisation of $Z^\pm\to X$ is provided by an oriented orthonormal
frame field $e(x)=(e_1(x),\ldots,e_4(x))$ for $TX$ on an open set $x\in U\subset X$.
If $e'(x)$ is another such frame field on $U'\subset X$ then the transition map 
between the associated fibre bundle charts is given by conjugation with the 
field of linear maps $A(x)\in SO(T_x X)\cong SO(\R^4)$ sending $e(x)$ to $e'(x)$ for $x\in U\cap U'$. 

The Levi-Civita connection associated to the metric $g$ on $X$ induces at any point $z\in Z^\pm$ a 
decomposition of the tangent space $T_z Z^\pm$ into the direct sum 
\[
	T_z Z^\pm = T_z^h Z^\pm \oplus T_z^v Z^\pm = \xi_z^\pm \oplus T_z^v Z^\pm,
\]
where $T_z^v Z^\pm= T_z \pi^{-1}(\pi(z))$ is the vertical tangent space (the tangent space to the fibre)
and  $\xi_z^\pm = T_z^h Z^\pm$  is the {\em horizontal space}.
This defines a {\em horizontal subbundle} $\xi^\pm \subset TZ^\pm$ such that 
the differential $d\pi^\pm_z: \xi_z^\pm \to T_{\pi^\pm(z)}X$ is an isomorphism for each $z\in Z^\pm$.
Every path $\gamma(t)$ in $X$ with $\gamma(0)=x$ admits a unique horizontal lift
$\lambda(t)$ in $Z^\pm$ (tangent to $\xi^\pm$)
with any given initial point $\lambda(0)=z\in (\pi^\pm)^{-1}(x) = \Jscr^\pm(T_xX)$,
obtained by the parallel transport of $z$ along $\gamma$ with respect to the Levi-Civita connection.
However, lifting a surface in $X$ to a horizontal surface is $Z^\pm$ 
is in general impossible due to noninvolutivity of $\xi^\pm$.

There is a natural almost complex structure $J^\pm$ on $Z^\pm$ determined by the 
condition that at each point $z\in Z$, $J^\pm_z$ agrees with the standard
almost complex structure on the vertical space $T_z^v Z^\pm \cong T_z\CP^1$, 
while on the horizontal space $\xi_z^\pm$ we have that
\begin{equation}\label{eq:twistor}
	d\pi^\pm_z \circ J^\pm_z = z\circ d \pi^\pm_z.
\end{equation}
It follows that $\xi^\pm$ is a $J^\pm$-complex subbundle of the tangent bundle $TZ^\pm$.
(The structure $J^\pm$ introduced above is denoted $J_1$ in 
\cite{AtiyahHitchinSinger1978,EellsSalamon1985}; the second structure 
$J_2^\pm$ is obtained by reversing the orientations on the fibres of twistor projections.
As shown in \cite{Salamon1985}, the structure $J_2^\pm$ is never integrable, but is nevertheless
interesting in view of \cite[Theorem 5.3]{EellsSalamon1985}.)

%
%
Here is a summary of some basic properties of twistor bundles. 

\begin{proposition}\label{prop:summary}
\begin{enumerate}[\rm (a)]
\item
Denoting by $\overline X$ the Riemannian manifold $X$ endowed with the same metric and
the opposite orientation, we have that
\[ 
	Z^+(\overline X) = Z^-(X),\quad\ Z^-(\overline X)=Z^+(X)
\] 
as hermitian fibre bundles over $X$, and also as almost complex manifolds. In particular, 
their horizontal bundles and the respective almost complex structures on them agree.
\item
There are antiholomorphic involutions $\iota^\pm :Z^\pm \to Z^\pm$ preserving the fibres of 
$\pi^\pm :Z^\pm \to X^\pm$ and taking any $J \in \Jscr^\pm(T_x X)$ to $-J\in \Jscr^\pm(T_x X)$. 
(Identifying the fibre with $\CP^1$, this is the map $z\mapsto -1/\bar z$ on each fibre.)
\item An orientation preserving isometry $\phi:X\to X$ lifts to holomorphic 
isometries $\Phi^\pm :Z^\pm\to Z^\pm$ preserving $\xi^\pm$ such that 
$\pi^\pm\circ \Phi^\pm =\phi\circ \pi^\pm$. Moreover, the almost complex type of $(Z^\pm,J^\pm)$   
only depends on the conformal class of a metric on $X$, but the horizontal spaces
$\xi^\pm$ depend on the choice of metric in that class.
\item An orientation reversing isometry $\theta:X\to X$ lifts to a holomorphic isometry 
$\Theta:(Z^+(X),J^+)\to (Z^-(X),J^-)$ making the following diagram commute:
\begin{equation}\label{eq:reversing}
	\xymatrix{
	Z^+(X) \ar[r]^{\Theta}\ar[d]_{\pi^+} & Z^-(X) \ar[d]^{\pi^-} \\ 
	X \ar[r]^{\theta} & X
	} 
\end{equation}
\end{enumerate}
\end{proposition}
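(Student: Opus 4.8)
The plan is to reduce everything to the pointwise linear algebra of Section~\ref{sec:H}, applied fibre by fibre, and then to promote the fibrewise maps to maps of twistor spaces using the naturality of the Levi-Civita connection and of the twistor almost complex structure. In each of the four parts the relevant map is prescribed by an obvious operation on fibres: the tautological identification $\Jscr^\pm(\R^4)\cong\Jscr^\mp(\overline{\R^4})$ for (a), the sign change $J\mapsto -J$ for (b), and conjugation $z\mapsto d\phi_x\circ z\circ (d\phi_x)^{-1}$ (resp.\ by $d\theta_x$) by the differential of the isometry for (c) and (d). The one recurring computation is that such a lift $F$ is holomorphic (resp.\ antiholomorphic) precisely when it intertwines (resp.\ anti-intertwines) the structures $J^\pm$; I would verify this separately on the vertical and horizontal summands of $TZ^\pm$, the vertical check being a statement about $\CP^1$ and the horizontal check being carried out by applying $d\pi^\pm$ and invoking the defining relation \eqref{eq:twistor}.

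For part~(a) I would first note that reversing the orientation of $X$ replaces the volume form $\Omega$ by $-\Omega$, hence the Hodge star $*$ by $-*$, so its eigenspaces are interchanged: $\Lambda^2_\pm(T\overline X)=\Lambda^2_\mp(TX)$ and therefore $Z^\pm(\overline X)=Z^\mp(X)$ as hermitian bundles. Since the Riemannian metric and its Levi-Civita connection do not depend on the orientation, the horizontal subbundles $\xi^\pm$ and the horizontal part of the twistor data coincide; the fibre complex structures agree by the orientation convention of Section~\ref{sec:H} (p.~\pageref{p:convention}), chosen exactly so that $S(\Lambda^2_\pm(\R^4))\cong S(\Lambda^2_\mp(\overline{\R^4}))$ is orientation preserving. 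For part~(b), the map $J\mapsto -J$ is well defined on $Z^\pm$ because $\pm J$ lie in the same space $\Jscr^\pm$, and on each fibre it is the fixed-point-free antipodal involution $z\mapsto -1/\bar z$, which is antiholomorphic. It commutes with parallel transport, since the latter acts on fibres by conjugation with the orientation-preserving orthogonal transport and $J\mapsto -J$ is invariant under conjugation, so $\iota^\pm$ preserves $\xi^\pm$; antiholomorphicity along $\xi^\pm$ then follows by applying $d\pi^\pm$ to $d\iota^\pm\circ J^\pm$ and using \eqref{eq:twistor} together with $\iota^\pm(z)=-z$.

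Parts~(c) and (d) share the same mechanism. An isometry preserves the Levi-Civita connection, so the lift $\Phi^\pm$ (resp.\ $\Theta$) automatically preserves the horizontal distribution and is a metric isometry of $Z^\pm$. Because $d\phi_x$, being orientation preserving, preserves each $\Jscr^\pm(T_xX)$ while $d\theta_x$, being orientation reversing, interchanges $\Jscr^+$ and $\Jscr^-$ (Section~\ref{sec:H}), the conjugation lift gives $\Phi^\pm:Z^\pm\to Z^\pm$ in (c) and $\Theta:Z^+(X)\to Z^-(X)$ in (d). Holomorphicity along $\xi^\pm$ is the computation already indicated: applying $d\pi$ to both $d\Phi\circ J$ and $J\circ d\Phi$ and using $\pi\circ\Phi=\phi\circ\pi$ reduces both sides to $d\phi\circ z\circ d\pi$, where the conjugation form of the lift is exactly what makes the two agree; on fibres, conjugation by an orientation-preserving orthogonal map is a holomorphic automorphism of $\CP^1$. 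The final assertion in~(c) I would attribute to the theorem of Atiyah, Hitchin and Singer quoted in the introduction: the set $\Jscr^\pm$ and the fibre complex structure depend only on the conformal class of $g$, whereas $\xi^\pm$ is defined through the connection and hence through the metric itself.

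The main obstacle is not the horizontal computation, which is formal once \eqref{eq:twistor} is in hand, but the orientation bookkeeping on the fibres — in particular making sure that each fibre map is holomorphic rather than antiholomorphic. This rests entirely on the outward/inward convention fixed in Section~\ref{sec:H} and on whether the fibrewise operation preserves or reverses the chosen orientation of $S^2\cong\CP^1$. The most error-prone case is (d), where the fibre map $S(\Lambda^2_+(T_xX))\to S(\Lambda^2_-(T_{\theta(x)}X))$ must be seen to be orientation preserving; I would cross-check this by the alternative route of regarding $\theta$ as an orientation-preserving isometry $X\to\overline X$, applying the naturality used in (c), and then identifying $Z^+(\overline X)=Z^-(X)$ via part~(a).
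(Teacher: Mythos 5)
Your proposal is correct. The paper offers no proof of this proposition at all --- it is stated as a summary of standard facts resting on the linear algebra of Section~\ref{sec:H} and the cited literature --- and your argument (the fibrewise reduction, the horizontal holomorphicity check by applying $d\pi^\pm$ and invoking \eqref{eq:twistor} after first verifying that the lift preserves the horizontal distribution, and especially the derivation of (d) by viewing $\theta$ as an orientation-preserving isometry $X\to\overline X$ and combining (c) with (a)) is exactly the intended one and is consistent with the outward/inward orientation convention the paper fixes for $\Jscr^\pm(\R^4)$.
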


An example of (d) is the antipodal map on $X=S^4$, and in this
case $Z^+(S^4)\cong Z^-(S^4)\cong \CP^3$ (see Bryant \cite{Bryant1982JDG},
Gauduchon \cite{Gauduchon1987B}, and Sect.\ \ref{sec:S4}).

%
%
\begin{example}\label{ex:R4}
(A) The twistor bundle $Z^+$ of $\R^4$ with the Euclidean metric is fibrewise diffeomorphic 
to $\R^4\times \CP^1$, and its horizontal distribution $\xi$ is involutive with the leaves 
$\R^4\times \{z\}$ for $z\in \CP^1$.
The almost complex structure $J^+$ on $Z^+$ restricted to the leaf $L_z=\R^4 \times \{z\}$ equals
$z\in \Jscr^+(\R^4)$, and $(L_z,z)$ is a complex manifold which is biholomorphic to $\C^2$ under
a rotation in $SO(4)$. As a complex manifold, $(Z^+,J^+)$ is biholomorphic to the total space of the 
vector bundle $\Oscr(1)\oplus \Oscr(1) \to \CP^1$, and the leaves $L_z$ of $\xi$ are the fibres of 
this projection. See \cite[Remark 2, p.\ 266]{Friedrich1984} for more details.
\qed \end{example}

%
%
Recall from \eqref{eq:G2} that an oriented $2$-plane $\Sigma\subset T_xX$
determines a pair of almost hermitian structures $J^\pm_\Sigma \in \Jscr^\pm(T_x X)$.
Let $M$ be an oriented surface. To any immersion 
$f:M\to X$ we associate the {\em twistor lifts} $F^\pm :M\to Z^\pm$ 
with $\pi^\pm\circ F^\pm =f$ by the condition that for any point $p\in M$ and $x=f(p)\in X$, 
\begin{equation}\label{eq:twistorlift}
	\text{$F^\pm(p) \in \Jscr^\pm(T_x X)$ is determined by the 
	oriented $2$-plane $df_p(T_p M)\subset T_xX$}.
\end{equation}
That is, $F^\pm(p)$ rotates for $+\pi/2$ in the oriented plane $\Sigma=df_p(T_p M)$
and for $\pm \pi/2$ in the cooriented orthogonal plane $\Sigma^\perp$. 
\vspace{-3mm}
\[	
	\xymatrix{& Z^\pm \ar[d]^{\pi^\pm} \\
	M \ar[r]^{f} \ar[ur]^{F^\pm} & X
	}
\]
Here is a more explicit description. Assume for simplicity that $M\subset X$ is embedded and let 
$TX|_M=TM\oplus N$ where $N$ is the orthogonal normal bundle of $M$ in $X$. 
Locally near any point $p\in M$ 
there is an oriented orthonormal frame field $(e_1,e_2,e_3,e_4)$ for $TX$ such that, along $M$, 
$(e_1,e_2)$ is an oriented frame for $TM$ while $(e_3,e_4)$ is a frame for $N$. Then,
$F^\pm$ is determined by the conditions $F^\pm e_1=e_2,\ \ F^\pm e_3=\pm e_4$. 

%
%
\begin{remark}\label{rem:regularity}
(A) The twistor lifts $F^\pm$ clearly depend on the first order jet of $f$.
Hence, if the immersion $f:M\to X$ is of class $\Cscr^r$ $(r\ge 1)$ then 
$F^\pm:M\to Z^\pm$ are of class $\Cscr^{r-1}$. 

(B) If $\wt M$ is the Riemann surface $M$ with the opposite orientation and 
$\wt F^\pm:\wt M\to Z^\pm$ denote the respective twistor lifts of $f:M\to X$, 
then $\wt F^\pm  = \iota^\pm \circ F^\pm$ where $\iota^\pm$ is the antiholomorphic involution 
on $Z^\pm$ in Proposition \ref{prop:summary} (B).

(C) An orientation reversing isometry $\theta:X\to X$ maps every superminimal surface
$f:M\to X$ of $\pm$ spin to a superminimal surface $\theta\circ f:M\to X$ of $\mp$ spin.
\qed \end{remark}

We have the following additional properties of twistor lifts of a {\em conformal} immersion.
The second statement is the first part of \cite[Proposition 3]{Friedrich1984}; note 
however that in \cite{Friedrich1984} an immersion $f:M\to X$ is tacitly assumed to be conformal.

%
%
\begin{lemma}\label{lem:conformal}
If $I$ is an almost complex structure on $M$ and $f:(M,I)\to X$ is a conformal immersion, 
then $F^\pm(p)\in \Jscr^\pm(T_{f(p)}X)$ $(p\in M)$ is uniquely determined by the condition
\begin{equation}\label{eq:lift2}
	df_p\circ I_p = F^\pm(p)\circ df_p.
\end{equation}
Furthermore, the horizontal part $(dF^\pm_p)^h$ of the differential of $F^\pm$ satisfies
\begin{equation}\label{eq:h-holo}
	(dF^\pm_p)^h \circ I_p = J^\pm_{F(p)}\circ (dF^\pm_p)^h,\quad p\in M.
\end{equation}
In particular, if the twistor lift $F^\pm$ of a conformal immersion $f:M\to X$ 
is horizontal, then it is holomorphic as a map from $(M,I)$ into $(Z^\pm,J^\pm)$.
\end{lemma}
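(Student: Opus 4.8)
The plan is to establish the three assertions in order, with the characterisation \eqref{eq:lift2} carrying most of the weight and the remaining two statements following formally from the definition \eqref{eq:twistor} of the twistor almost complex structure.

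First I would prove \eqref{eq:lift2} together with the uniqueness claim. Write $x=f(p)$ and $\Sigma=df_p(T_pM)\subset T_xX$; since $f$ is an immersion this is a genuine oriented $2$-plane, and $df_p:T_pM\to\Sigma$ is an orientation preserving conformal linear isomorphism. The almost complex structure $I_p$ on $T_pM$ is, with respect to its conformal structure and orientation, rotation by $+\pi/2$; because $df_p$ is conformal and orientation preserving, the conjugated operator $df_p\circ I_p\circ df_p^{-1}$ is again rotation by $+\pi/2$, now on $\Sigma$. On the other hand, by the defining property \eqref{eq:twistorlift} the structure $F^\pm(p)=J^\pm_\Sigma$ restricts on $\Sigma$ to rotation by $+\pi/2$ in the positive direction. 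Hence $F^\pm(p)|_\Sigma=df_p\circ I_p\circ df_p^{-1}$, which is exactly \eqref{eq:lift2}. Conversely, any $J\in\Jscr^\pm(T_xX)$ satisfying \eqref{eq:lift2} acts on $\Sigma$ as positive rotation by $\pi/2$, hence preserves the oriented plane $\Sigma$ and therefore coincides with $J^\pm_\Sigma$ by the discussion preceding \eqref{eq:G2}; this gives uniqueness.

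Next I would deduce \eqref{eq:h-holo}. Abbreviate $z=F^\pm(p)$ and recall that $d\pi^\pm_z$ restricts to an isomorphism $\xi_z^\pm \stackrel{\cong}{\lra} T_xX$, so it suffices to verify \eqref{eq:h-holo} after applying $d\pi^\pm_z$. For $v\in T_pM$, differentiating $\pi^\pm\circ F^\pm=f$ and discarding the vertical part, which lies in $\ker d\pi^\pm_z$, gives $d\pi^\pm_z\bigl((dF^\pm_p)^h(v)\bigr)=df_p(v)$. Applying $d\pi^\pm_z$ to the right-hand side of \eqref{eq:h-holo} and using the defining relation \eqref{eq:twistor} on $\xi_z^\pm$ yields $d\pi^\pm_z\bigl(J^\pm_z(dF^\pm_p)^h(v)\bigr)=z\,\bigl(d\pi^\pm_z(dF^\pm_p)^h(v)\bigr)=z\,\bigl(df_p(v)\bigr)$, while applying it to the left-hand side gives $df_p(I_p v)$. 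By \eqref{eq:lift2} we have $z\,\bigl(df_p(v)\bigr)=df_p(I_p v)$, so the two sides agree, and since $d\pi^\pm_z$ is injective on $\xi_z^\pm$ the identity \eqref{eq:h-holo} follows.

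Finally, the last statement is immediate: if $F^\pm$ is horizontal then $dF^\pm_p$ has vanishing vertical part, so $(dF^\pm_p)^h=dF^\pm_p$, and \eqref{eq:h-holo} becomes the Cauchy--Riemann equation $dF^\pm_p\circ I_p=J^\pm_{F^\pm(p)}\circ dF^\pm_p$, i.e.\ $F^\pm$ is $(I,J^\pm)$-holomorphic. The only genuinely delicate step is the first one, where conformality of $f$ is exactly what is needed to match the rotation $I_p$ on $T_pM$ with the positive $\pi/2$ rotation built into $F^\pm(p)$ on the image plane $\Sigma$; without conformality the operator $df_p\circ I_p\circ df_p^{-1}$ need not be an isometric rotation of $\Sigma$, and \eqref{eq:lift2} would fail.
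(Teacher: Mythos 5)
Your proof is correct and follows essentially the same route as the paper: the identity \eqref{eq:h-holo} is obtained by applying $d\pi^\pm$, chaining the relations $d\pi\circ dF^h=df$, \eqref{eq:lift2} and \eqref{eq:twistor}, and invoking injectivity of $d\pi^\pm$ on $\xi^\pm$, with the horizontal case then immediate. The only difference is that you spell out the verification of \eqref{eq:lift2} (which the paper dismisses as an immediate consequence of the definition of $F^\pm$ and conformality), and your argument there is accurate.
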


\begin{proof}
The formula \eqref{eq:lift2} is an immediate consequence of the definition of $F^\pm$ and the 
conformality of $f$. Let $F$ denote any of the lifts $F^\pm$. From $\pi\circ F=f$ we get that
\begin{equation}\label{eq:chain}
	d\pi_{F(p)}\circ dF^h_p = d\pi_{F(p)}\circ dF_p =df_p,\quad p\in M, 
\end{equation}
and hence
\[ 
	d\pi_{F(p)}\!\circ\! dF^h_p \!\circ\! I_p \stackrel{\eqref{eq:chain}}{=}
	df_p\!\circ\! I_p \stackrel{\eqref{eq:lift2}}{=} 
	F(p)\!\circ\! df_p \stackrel{\eqref{eq:chain}}{=} F(p)\!\circ\! d\pi_{F(p)}\!\circ\! dF^h_p
	\stackrel{\eqref{eq:twistor}}{=} d\pi_{F(p)}\!\circ\! J^\pm_{F(p)}\!\circ\! dF_p^h.
\]
Since the vectors under $d\pi_{F(p)}$ are horizontal, \eqref{eq:h-holo} follows.
\end{proof}

We now consider conformal immersions $f:M\to X$ which arise as projections 
to $X$ of holomorphic immersions $F:M\to Z^\pm$. 
The following result is \cite[Proposition 1]{Friedrich1984}. 

%
%
\begin{lemma}\label{lem:P1}
Let $(Z,J)$ denote any of the twistor manifolds $(Z^\pm(X),J^\pm)$. 
If $F:(M,I) \to (Z,J)$ is a holomorphic immersion such that $dF_p(T_p M)$ intersects the 
vertical tangent space $T^v_{F(p)}Z$ only at $0$ for every $p\in M$, 
then $F$ agrees with the twistor lift $F^\pm$ \eqref{eq:twistorlift} of its projection $f=\pi\circ F:M\to X$.
\end{lemma}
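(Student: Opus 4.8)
The plan is to prove that $F(p)=F^\pm(p)$ for every $p\in M$, where $F^\pm$ is the twistor lift \eqref{eq:twistorlift} of the projection $f=\pi\circ F$. First I would check that the transversality hypothesis makes $f$ an immersion, so that $F^\pm$ — which is defined through the oriented plane $df_p(T_pM)$ — actually makes sense: since $\ker d\pi_{F(p)}=T^v_{F(p)}Z$ and $df_p=d\pi_{F(p)}\circ dF_p$, any $v\in\ker df_p$ has $dF_p(v)\in dF_p(T_pM)\cap T^v_{F(p)}Z=\{0\}$, so $v=0$ because $F$ is an immersion. The main idea is then to push the holomorphicity identity for $F$ down to $X$ by $d\pi$ and thereby recover the defining relation \eqref{eq:lift2} of the twistor lift.

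For the key computation, decompose the differential as $dF_p=dF^h_p+dF^v_p$ according to $TZ=\xi\oplus T^vZ$. Because $J=J^\pm$ preserves this splitting — it restricts to the standard structure on the fibre and makes $\xi$ a $J$-complex subbundle — the composition $J_{F(p)}\circ dF^h_p$ is horizontal-valued while $J_{F(p)}\circ dF^v_p$ is vertical-valued. Applying $d\pi_{F(p)}$ to the holomorphicity identity $dF_p\circ I_p=J_{F(p)}\circ dF_p$, and using $d\pi_{F(p)}\circ dF_p=df_p$, the vanishing of $d\pi$ on vertical vectors, and the twistor relation \eqref{eq:twistor} $d\pi_z\circ J_z=z\circ d\pi_z$ valid on horizontal vectors, I obtain
\[
	df_p\circ I_p=d\pi_{F(p)}\circ J_{F(p)}\circ dF^h_p=F(p)\circ d\pi_{F(p)}\circ dF^h_p=F(p)\circ df_p .
\]
This is exactly \eqref{eq:lift2} with $F(p)$ in place of $F^\pm(p)$, and the computation is the mirror image of the chain of equalities appearing in the proof of Lemma \ref{lem:conformal}.

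To finish, observe that $F(p)$ lies in the fibre $(\pi^\pm)^{-1}(f(p))=\Jscr^\pm(T_{f(p)}X)$ since $F$ maps into $Z^\pm$, and that it satisfies $df_p\circ I_p=F(p)\circ df_p$. As an element of $\Jscr^\pm(T_{f(p)}X)$ it is a metric rotation by $\pi/2$, so this relation forces $df_p$ to carry $I_p$ to a metric rotation on the image plane $df_p(T_pM)$; in particular $f$ is conformal, consistent with the hypothesis of Lemma \ref{lem:conformal}. The uniqueness asserted in that lemma then identifies the element of $\Jscr^\pm(T_{f(p)}X)$ satisfying \eqref{eq:lift2} as $F^\pm(p)$, whence $F=F^\pm$, as claimed.

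The step needing the most care is the bookkeeping in the displayed computation: one must verify that $J^\pm$ genuinely respects the horizontal/vertical decomposition, so that the purely vertical contribution is really annihilated by $d\pi$ and so that \eqref{eq:twistor}, which is stated only on $\xi$, legitimately applies to $dF^h_p$. Everything else is a formal consequence of the definitions of the twistor almost complex structure $J^\pm$ and of the twistor lift.
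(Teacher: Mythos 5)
Your proposal is correct and follows essentially the same route as the paper: both push the holomorphicity identity for $F$ down to $X$ via $d\pi$, using the $J^\pm$-invariance of the horizontal/vertical splitting and the twistor relation \eqref{eq:twistor} to arrive at $df_p\circ I_p=F(p)\circ df_p$, and then conclude that $f$ is conformal and $F(p)=F^\pm(p)$. Your write-up is in fact slightly more detailed than the paper's, which leaves the immersion check and the annihilation of the vertical component implicit.
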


\begin{proof}
The conditions on $F$ implies that $f$ is an immersion. Fix a point $p\in M$. 
Since $F$ is holomorphic and the horizontal space $T^h_{F(p)}Z$ in $J$-invariant, 
\eqref{eq:h-holo} holds and hence 
\[
	df_p\circ I_p \stackrel{\eqref{eq:chain}}{=}  
	 d\pi_{F(p)} \circ dF^h_p \circ I_p \stackrel{\eqref{eq:h-holo}}{=} d\pi_{F(p)} \circ J_{F(p)}\circ dF^h_p
	\stackrel{\eqref{eq:twistor}}{=} F(p)\circ d\pi_{F(p)} \circ dF^h_p 
	\stackrel{\eqref{eq:chain}}{=}  F(p) \circ df_p.
\]
This shows that $f$ is conformal and $F$ is its twistor lift (cf.\ \eqref{eq:lift2}).
\end{proof}

The following key statement combines the above observations with \cite[Proposition 4]{Friedrich1984}. 
When $X=S^4$ with the spherical metric, this is due to Bryant \cite[Theorems B, B']{Bryant1982JDG};
the general case was proved by Friedrich \cite[Proposition 4]{Friedrich1984}. 

%
%
\begin{theorem}[The Bryant correspondence]
\label{th:Friedrich}
Let $M$ be a Riemann surface, and let $(X,g)$ be an oriented Riemannian four-manifold. The 
following conditions are pairwise equivalent for a smooth conformal immersion $f:M\to X$
(with the same choice of $\pm$ in every item).
\begin{enumerate}[\rm (a)] 
\item $f$ is superminimal of $\pm$ spin (see Definition \ref{def:SM}).
\item $f$ admits a holomorphic horizontal lift $M\to Z^\pm(X)$.
\item The twistor lift $F^\pm:M\to Z^\pm(X)$ of $f$ (see \eqref{eq:twistorlift}) is horizontal.
\item We have that $\nabla F^\pm=0$, where $\nabla$ is the covariant derivative 
on the vector bundle $f^*(TX)\to M$ induced by the Levi-Civita connection on $X$.
\end{enumerate}
\end{theorem}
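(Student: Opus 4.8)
The plan is to prove the four conditions pairwise equivalent through the chain (b)$\Leftrightarrow$(c), (c)$\Leftrightarrow$(d), (a)$\Leftrightarrow$(d), where the last link carries essentially all the geometric content and the first two are formal consequences of what has already been set up. I use throughout that $f$ is conformal, so that Lemmas \ref{lem:conformal} and \ref{lem:P1} apply to $F=F^\pm$.

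For (c)$\Rightarrow$(b): if the twistor lift $F^\pm$ is horizontal, then \eqref{eq:h-holo} of Lemma \ref{lem:conformal} shows it is holomorphic from $(M,I)$ into $(Z^\pm,J^\pm)$, hence is itself a holomorphic horizontal lift of $f$. For (b)$\Rightarrow$(c): given any holomorphic horizontal lift $F:M\to Z^\pm$, horizontality gives $dF_p(T_pM)\subset\xi^\pm_{F(p)}$, and since $d\pi^\pm$ restricts to an isomorphism on $\xi^\pm$ while $f=\pi^\pm\circ F$ is an immersion, $dF_p$ is injective with image meeting the vertical space $T^v_{F(p)}Z^\pm$ only at $0$; Lemma \ref{lem:P1} then identifies $F$ with the twistor lift $F^\pm$, which is therefore horizontal. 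Next, (c)$\Leftrightarrow$(d): the lift $F^\pm$ is a section along $f$ of $\Jscr^\pm(TX)=S(\Lambda^2_\pm(TX))$, equivalently a field of $g$-compatible complex structures on $f^*TX$, i.e.\ a section of $\End(f^*TX)$. Since the horizontal subbundle $\xi^\pm\subset TZ^\pm$ was defined by parallel transport for the Levi--Civita connection, the vertical part of $dF^\pm$ is exactly the induced covariant derivative $\nabla F^\pm$ on $f^*(TX)$; thus $dF^\pm$ takes values in $\xi^\pm$ if and only if $\nabla F^\pm=0$.

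It remains to prove (a)$\Leftrightarrow$(d), which I carry out in a local adapted frame (taking $M\subset X$ embedded; the immersed case is identical). Near a point $p$ choose an oriented orthonormal frame $(e_1,e_2,e_3,e_4)$ for $TX$ with $(e_1,e_2)$ an oriented frame for $TM$ and $(e_3,e_4)$ a cooriented frame for $N$, so that $F^\pm e_1=e_2$ and $F^\pm e_3=\pm e_4$; write $\epsilon=\pm 1$. Let $\omega_{ij}=-\omega_{ji}$ be the connection $1$-forms, $\nabla e_i=\sum_j\omega_{ij}e_j$. Expanding $(\nabla_v F^\pm)e_i=\nabla_v(F^\pm e_i)-F^\pm(\nabla_v e_i)$ and collecting coefficients, the condition $\nabla F^\pm=0$ reduces to the two $1$-form identities $\omega_{13}=\epsilon\,\omega_{24}$ and $\omega_{23}=-\epsilon\,\omega_{14}$ on $M$. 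Evaluating these on $e_1,e_2$ and using $\langle S(e_\alpha)e_a,e_b\rangle=\omega_{b\alpha}(e_a)=:h^\alpha_{ab}$ (symmetric in $a,b$ as the connection is torsion-free) yields $h^3_{11}=\epsilon h^4_{12}$, $h^3_{12}=\epsilon h^4_{22}=-\epsilon h^4_{11}$, $h^3_{22}=-\epsilon h^4_{12}$. These force $S_3:=S(e_3)$ and $S_4:=S(e_4)$ to be trace-free (so $f$ is minimal), and writing $S_3=\begin{pmatrix}a&b\\ b&-a\end{pmatrix}$, $S_4=\begin{pmatrix}c&d\\ d&-c\end{pmatrix}$ they become $a=\epsilon d$ and $b=-\epsilon c$. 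One checks directly that these are precisely the conditions that each indicatrix curve $I_p(v)=\{(\cos\theta\,S_3+\sin\theta\,S_4)v\}$ be a circle centred at $0$ for every $v$, with the map $n\mapsto S(n)v$ orientation preserving for $\epsilon=+1$ and orientation reversing for $\epsilon=-1$, the spin being recorded by $\det\begin{pmatrix}a&c\\ b&d\end{pmatrix}=\epsilon(c^2+d^2)$; this is exactly superminimality of $\pm$ spin in the sense of Definition \ref{def:SM}.

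The main obstacle is the bookkeeping in this last step. One must verify that ``$I_p(v)$ is a circle for every $v$'' is equivalent to the algebraic pair $S_3^2=S_4^2$, $S_3S_4+S_4S_3=0$ (which itself delivers the trace-free, hence minimal, conclusion), and that together with the orientation datum it sharpens to the signed relations $a=\epsilon d$, $b=-\epsilon c$. Equally delicate is ensuring that the sign $\epsilon$ threading through the connection-form computation is consistent with the outward/inward orientation convention fixed on $\Jscr^\pm(\R^4)$ in Section \ref{sec:H}, so that positive spin corresponds to $Z^+$ and $F^+e_3=+e_4$. Keeping these orientation choices aligned is where care is needed; the remaining manipulations are routine linear algebra.
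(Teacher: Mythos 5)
Your proposal is correct, but it reorganises and, more importantly, replaces the core of the paper's argument. The paper's proof is deliberately a sketch: it obtains (b)$\Leftrightarrow$(c) from Lemma \ref{lem:P1} exactly as you do, but then handles (a)$\Leftrightarrow$(c) and (c)$\Leftrightarrow$(d) by citing Friedrich's Propositions 4 and 5 from \cite{Friedrich1984} (horizontality of $F^-$ characterised via the second fundamental forms, and via the ``negatively oriented-isoclinic'' condition, respectively), supplemented by an orientation-reversal argument to transfer the statements from the $-$ to the $+$ case. You instead prove the chain (c)$\Leftrightarrow$(d)$\Leftrightarrow$(a) from scratch: the identification of the vertical part of $dF^\pm$ with $\nabla F^\pm$ is the standard fact about sections of associated bundles and matches the paper's reading of Friedrich's Proposition 5, while your moving-frame computation (the relations $\omega_{13}=\epsilon\,\omega_{24}$, $\omega_{23}=-\epsilon\,\omega_{14}$, hence $h^3_{11}=\epsilon h^4_{12}$, $h^3_{12}=\epsilon h^4_{22}=-\epsilon h^4_{11}$, $h^3_{22}=-\epsilon h^4_{12}$) is correct and reproduces, in explicit coordinates, exactly the content of Friedrich's Proposition 4 together with the spin bookkeeping that the paper only describes verbally (``the display on the middle of page 266''). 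Your signed relations $a=\epsilon d$, $b=-\epsilon c$ do encode both the circle condition ($S_3^2=S_4^2$, $S_3S_4+S_4S_3=0$, forcing trace-freeness and hence minimality) and the direction of traversal via $\det(S_3v\,|\,S_4v)=\epsilon(c^2+d^2)$, and your sign $\epsilon=+1$ for $F^+e_3=+e_4$ is consistent with the convention \eqref{eq:Je} and with Definition \ref{def:SM}. What your approach buys is a self-contained verification that does not lean on \cite{Friedrich1984} and makes the spin/orientation matching checkable line by line; what it costs is that the routine-but-fiddly verifications you flag at the end (circle condition $\Leftrightarrow$ the algebraic pair, and the degenerate case where $S_3=S_4=0$) must actually be carried out, whereas the paper outsources them. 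I see no gap.
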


\begin{proof}[Sketch of proof] 
The equivalence of (b) and (c) follows from Lemma \ref{lem:P1}.

Consider now (a)$\Leftrightarrow$(c). In \cite[Proposition 4]{Friedrich1984}, horizontality of the twistor lift 
$F^-:M\to Z^-$ (condition (c)) is characterized by a certain geometric property of the second 
fundamental forms $S_p(n):T_pM\to T_pM$ of $f$ at $p\in M$ in unit normal directions $n\in N_p$.
An inspection of the proof shows that this property is equivalent to $f$ being a superminimal 
surface of negative spin in the sense of Definition \ref{def:SM}, hence to condition (a). 
Although not stated in \cite{Friedrich1984}, the same proof gives the 
analogous conclusion for conformal superminimal immersions $f:M\to X$
of positive spin with respect to the twistor lift $F^+:M\to Z^+$. 
The crux of the matter can be seen from the display 
on the middle of page 266 in \cite{Friedrich1984} which shows that the rotation of
the unit normal vector $n\in N_pM$ in a given direction corresponds to the rotation of the point 
$S_p(n)v\in I_p(v)\subset T_pM$ \eqref{eq:Ixv} in the opposite direction 
(assuming that the spaces $T_pM$ and $N_pM$ are coorriented). 
Reversing the orientation on $X$, $F^-$ is replaced by $F^+$ and the respective curves now rotate 
in the same direction, so $F^+$ is horizontal if and only if $f$ is superminimal of positive spin. 
The direction of rotation is irrelevant (only) at points $p\in  M$ where the normal curvature 
of the immersion $f$ vanishes and hence the circle $I_p(v)$ reduces to the origin.

Concerning (c)$\Leftrightarrow$(d), Friedrich showed in \cite[Proposition 5, p.\ 270]{Friedrich1984} 
that the twistor lift $F^-$ is horizontal if and only if the immersion $f$ is {\em negatively oriented-isoclinic}. 
It is immediate from his description that the latter property simply says that the almost complex structure 
on the vector bundle $f^*TX=TM\oplus N$ adapted to $f$ (which is precisely the structure $F^-$) 
is invariant under parallel transport along curves in $M$; equivalently, $F^-$  is parallel with respect to 
the covariant derivative $\nabla $ on $f^*TX$ induced by the Levi-Civita connection on $X$: 
$\nabla F^-=0$. Reversing the orientation on $X$, the analogous conclusion shows that $F^+$ is 
horizontal if and only if $f$ is {\em positively oriented-isoclinic} if and only if $\nabla F^+=0$.
(See also \cite[Proposition 17]{Gauduchon1987A} and \cite[Proposition 1]{MontielUrbano1997}.)
\end{proof}

In light of the Bryant correspondence, it is a natural question whether not necessarily horizontal
holomorphic curves in twistor spaces $Z^\pm(X)$ might yield a larger class or minimal surfaces
in the given Riemannian four-manifold $X$. In fact, this is not so as shown by the following result
of Friedrich \cite[Proposition 3]{Friedrich1984}. (Note that in \cite{Friedrich1984} 
an immersion $f:M\to X$ is called superminimal if and only if its twistor lift is horizontal.)

%
%
\begin{lemma}\label{lem:P3}
The following are equivalent for a smooth conformal immersion $f:M\to X$.
\begin{enumerate}[\rm (i)]
\item The twistor lift $F^\pm:M\to Z^\pm$ of $f$ is horizontal. (By Theorem \ref{th:Friedrich}, this is
equivalent to saying that $f$ is superminimal of $\pm$ spin.) 
\item $f$ is a minimal surface in $X$ and it admits a holomorphic lift $\tilde f:M\to Z^\pm$.
\end{enumerate}
\end{lemma}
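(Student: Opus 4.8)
The plan is to establish the two implications separately; the first is essentially a repackaging of earlier results, while the second carries the real content. For (i) $\Rightarrow$ (ii) I would argue as follows. Assume the twistor lift $F^\pm$ is horizontal. By the final assertion of Lemma~\ref{lem:conformal}, the horizontal twistor lift of a conformal immersion is automatically holomorphic as a map $(M,I)\to(Z^\pm,J^\pm)$, so $\tilde f:=F^\pm$ already serves as the required holomorphic lift. Moreover, by Theorem~\ref{th:Friedrich} horizontality of $F^\pm$ is equivalent to $f$ being superminimal of $\pm$ spin, and every superminimal surface is minimal (Remark~\ref{rem:SM}(A)). Hence (ii) holds.

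For (ii) $\Rightarrow$ (i), the first step is to show that \emph{any} holomorphic lift $\tilde f$ of the immersion $f$ must in fact coincide with the twistor lift $F^\pm$. Since $\pi^\pm\circ\tilde f=f$ and $df_p=d\pi^\pm_{\tilde f(p)}\circ d\tilde f_p$ has rank $2$, the differential $d\tilde f_p$ has rank $2$ as well, so $\tilde f$ is an immersion; and because $d\pi^\pm$ then restricts to an isomorphism of $d\tilde f_p(T_pM)$ onto $df_p(T_pM)$, the plane $d\tilde f_p(T_pM)$ meets the vertical space $T^v_{\tilde f(p)}Z^\pm$ only at $0$. Lemma~\ref{lem:P1} then gives $\tilde f=F^\pm$. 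In particular the twistor lift $F^\pm$ is $J^\pm$-holomorphic, i.e.\ holomorphic for the structure denoted $J_1$ in \cite{EellsSalamon1985}.

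The decisive second step is to feed in the minimality of $f$ through the Eells--Salamon correspondence \cite[Theorem~5.3]{EellsSalamon1985}: a conformal minimal immersion is harmonic, and harmonicity of $f$ is equivalent to its twistor lift $F^\pm$ being holomorphic for the \emph{other} almost complex structure $J_2^\pm$, obtained from $J^\pm=J_1$ by reversing the orientation on the fibres. Thus $F^\pm$ is simultaneously $J_1$- and $J_2$-holomorphic. Writing $dF^\pm_p=(dF^\pm_p)^h+(dF^\pm_p)^v$ and using that $J_1$ and $J_2$ preserve the horizontal--vertical splitting, agree on the horizontal bundle $\xi^\pm$, and satisfy $J_2=-J_1$ on the vertical bundle, the vertical components of the two holomorphicity identities read $(dF^\pm_p)^v\circ I_p=J^\pm\circ(dF^\pm_p)^v$ and $(dF^\pm_p)^v\circ I_p=-J^\pm\circ(dF^\pm_p)^v$. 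Subtracting forces $(dF^\pm_p)^v=0$ for every $p$, i.e.\ $F^\pm$ is horizontal, which is (i). Here the rank computation and the concluding linear-algebra cancellation are routine; the main obstacle is concentrated inside the Eells--Salamon theorem, whose proof is the second-order harmonic-map computation that converts minimality of $f$ into $J_2$-holomorphicity of its lift. Should one wish to avoid citing it, the alternative is to follow Friedrich's direct argument in \cite[Proposition~3]{Friedrich1984}, which characterises horizontality of $F^\pm$ straight from the harmonic map equation.
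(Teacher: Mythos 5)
Your proof is correct, and for the substantive implication (ii) $\Rightarrow$ (i) it takes a genuinely different route from the paper. The paper's argument rests on a single identity from Friedrich \cite[Proposition 3]{Friedrich1984}: for a holomorphic lift, the vertical derivative $(dF^\pm_p)^v$ \emph{equals} the mean curvature vector of $f$ at $p$. This gives both directions of the equivalence at one stroke (horizontal $\Leftrightarrow$ vertical derivative vanishes $\Leftrightarrow$ mean curvature vanishes $\Leftrightarrow$ minimal), with the $+$ sign recovered from $Z^+(X)=Z^-(\overline X)$. You instead invoke the Eells--Salamon correspondence (conformal harmonic $\Leftrightarrow$ $J_2$-holomorphic lift) and then observe that simultaneous $J_1$- and $J_2$-holomorphicity kills the vertical derivative, since the two structures agree on $\xi^\pm$ and are opposite on the vertical bundle. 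That cancellation is clean and conceptually illuminating, but note that the two routes are close cousins: the analytic content of the Eells--Salamon theorem is precisely the second-order computation identifying the vertical ($J_2$-antiholomorphic) part of the lift's derivative with the tension field, which for a conformal map is the mean curvature --- i.e., essentially Friedrich's identity in different clothing. What the paper's choice buys is economy (one cited identity, already used elsewhere in the paper, no need to introduce $J_2$); what yours buys is a self-contained linear-algebra endgame and, as a bonus, an explicit verification of the transversality hypothesis of Lemma \ref{lem:P1} (that $d\tilde f_p(T_pM)$ meets $T^v_{\tilde f(p)}Z^\pm$ only at $0$), which the paper passes over in silence. If you keep the Eells--Salamon route, you should check that the sign conventions for $J_2^\pm$ in \cite{EellsSalamon1985} match the twistor lift as defined here; your closing remark offering Friedrich's direct argument as a fallback is the right instinct.
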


\begin{proof}[Sketch of proof] 
If (i) holds then $F^\pm$ is holomorphic by Lemma \ref{lem:conformal}. Conversely, if $f$ admits 
a holomorphic lift $\tilde f$, then $\tilde f=F^\pm$ by Lemma \ref{lem:P1}. 
Friderich showed \cite[Proposition 3]{Friedrich1984} that if $F^-$ is holomorphic then the vertical 
derivative $(dF^-_p)^v$ equals the mean curvature vector of $f$ at $p\in M$. Since a surface is minimal if 
and only if its mean curvature vector vanishes, the equivalence (i)$\Leftrightarrow$(ii) follows for the $-$ sign. 
It also holds for the $+$ sign since $Z^+(X)=Z^-(\overline X)$ (cf.\ Proposition \ref{prop:summary} (a)) 
and the space of minimal surfaces in $X$ does not depend on the choice of orientation of $X$. 
\end{proof}

%
%
\begin{remark}\label{rem:umbilic}
A conformal immersion $M\to X$ whose twistor lifts $M\to Z^\pm(X)$ are both holomorphic 
parameterizes a totally umbilic surface in $X$ (cf.\ \cite[Proposition 6.1]{EellsSalamon1985}).
Note also that both twistor lifts $F^\pm$ are horizontal precisely when all circles 
$I_p(v)\subset T_p M$ \eqref{eq:Ipv} are points, so the normal curvature vanishes and the surface
is totally geodesic. 
\qed \end{remark}

%
%
\begin{remark}\label{rem:horizontal}
A conformal immersion $f:M\to X$ may admit several horizontal lifts $M\to Z^\pm$, 
or no such lift. For example, if $X=\R^4$ with the flat metric 
then the horizontal distribution on $Z^\pm\cong\R^4\times \CP^1$ 
is involutive and each leaf projects diffeomorphically onto $X$ (see Example \ref{ex:R4}), so $f$ admits
a horizontal lift to every leaf; however, only the twistor lift can be holomorphic in view of Lemma \ref{lem:P1}. 
The situation is quite different if the horizontal distribution $\xi^\pm=T^h Z^\pm$ 
is a holomorphic contact bundle on $Z^\pm$.
In such case, any horizontal lift $M\to Z^\pm$ is a conformal Legendrian surface 
(tangential to the contact bundle $\xi^\pm$), hence holomorphic or antiholomorphic by 
\cite[Lemma 5.1]{AlarconForstneric2019IMRN}. By Lemma \ref{lem:P1}, this 
is the twistor lift or its antiholomorphic reflection (see Proposition \ref{prop:summary} (b)).
\qed \end{remark}

%
%
\begin{remark}\label{rem:biquadratic}
Another characterisation of superminimal surfaces is given by the vanishing of a certain quartic form 
which was first studied by Calabi \cite{Calabi1967} and Chern \cite{Chern1970A,Chern1970B}; see also 
Bryant \cite{Bryant1982JDG} and Gauduchon \cite[Proposition 7]{Gauduchon1987A}. This shows that every 
minimal immersion $S^2\to S^4$ is superminimal; 
see \cite[Theorem C]{Bryant1982JDG} or \cite[Proposition 25]{Gauduchon1987A}.  The same holds for 
minimal immersions $S^2\to \CP^2$ (see \cite[Proposition 28]{Gauduchon1987A}). 
\qed\end{remark}

We now recall two classical integrability theorems pertaining to twistor spaces.
The first one is due to Atiyah, Hitchin, and Singer  \cite[Theorem 4.1]{AtiyahHitchinSinger1978}.

%
%
\begin{theorem}\label{th:Atiyah}
The twistor space $(Z^\pm,J^\pm)$ of a smooth oriented Riemannian four-manifold $(X,g)$ is 
an integrable complex manifold if and only if the conformally invariant Weil tensor 
$W=W^++W^-$ of $(X,g)$  satisfies $W^+=0$ or $W^-=0$, respectively. 
\end{theorem}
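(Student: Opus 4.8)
The plan is to compute the Nijenhuis tensor of the almost complex structure $J^\pm$ on $Z^\pm$ and show that its vanishing is governed precisely by the self-dual (resp.\ anti-self-dual) part of the Weyl tensor. By the Newlander--Nirenberg theorem, integrability of $(Z^\pm,J^\pm)$ is equivalent to the vanishing of this tensor, so the whole theorem reduces to a curvature computation on the fibres of $\pi^\pm$. First I would set up adapted local coordinates: fix an oriented orthonormal frame field $(e_1,e_2,e_3,e_4)$ for $TX$ over an open set $U\subset X$, which trivialises $Z^\pm|_U \cong U\times \Jscr^\pm(\R^4)\cong U\times S^2$, and use the Levi-Civita connection to split $TZ^\pm = \xi^\pm\oplus T^vZ^\pm$ as in the excerpt. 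The structure $J^\pm$ acts as the standard structure of $\CP^1\cong S^2$ on the vertical part and, via \eqref{eq:twistor}, as the tautological structure $z$ on the horizontal part.

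Next I would exploit the bigrading of the Nijenhuis tensor with respect to this splitting. The vertical fibres are genuine holomorphic rational curves (the integrable structure $S^2\cong\CP^1$), so the purely vertical component of $N$ vanishes automatically; the mixed horizontal-vertical terms measure how the horizontal distribution twists as one moves along a fibre, and these can be computed directly from the connection and turn out to impose no condition. The essential component is the purely horizontal part of $N(V,W)$ for horizontal vector fields $V,W$: by the structure equations, this is expressed through the curvature operator $R$ of the Levi-Civita connection restricted to the horizontal plane, evaluated against the point $z\in\Jscr^\pm(T_xX)$. The curvature operator decomposes, following Atiyah--Hitchin--Singer and the discussion around \eqref{eq:basisofLambda2}, as $R=\begin{pmatrix} \frac{s}{12}\mathrm{Id}+W^+ & B\\ B^* & \frac{s}{12}\mathrm{Id}+W^- \end{pmatrix}$ acting on $\Lambda^2 = \Lambda^2_+\oplus\Lambda^2_-$, where $s$ is the scalar curvature and $B$ is the traceless Ricci part. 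The key algebraic fact is that the scalar curvature term, being proportional to the identity, is compatible with every almost hermitian structure and drops out of the obstruction, and the off-diagonal Ricci term $B$ likewise does not contribute to the relevant component.

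The computation then shows that the horizontal Nijenhuis component at a point $z\in S(\Lambda^2_\pm(T_xX))$ is a specific algebraic expression in $W^\pm$ evaluated at $z$. For the positive twistor space $Z^+$, where the fibre is $S(\Lambda^2_+)$, the obstruction is exactly the self-dual Weyl part $W^+$ viewed as a trace-free symmetric endomorphism of $\Lambda^2_+$; its vanishing for \emph{all} fibre points $z$ forces $W^+=0$, and conversely $W^+=0$ makes the expression vanish identically. The symmetric statement with $\Lambda^2_-$ and $W^-$ holds for $Z^-$, consistent with Proposition~\ref{prop:summary}(a), which identifies $Z^-(X)=Z^+(\overline X)$ and interchanges $W^+\leftrightarrow W^-$ under orientation reversal; this gives a clean way to deduce the $Z^-$ case from the $Z^+$ case for free.

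The main obstacle will be the horizontal Nijenhuis computation itself: carefully tracking how the point $z\in\Jscr^\pm$ (which varies over the fibre) interacts with the curvature operator, and verifying that precisely the component matching $W^\pm$ survives while the scalar-curvature and Ricci contributions cancel. This requires a faithful identification of $\Lambda^2_\pm(T_xX)$ with the space in which the fibre $S^2$ sits and a correct bookkeeping of the action of $z$, the standard structure on the vertical $\CP^1$, and the curvature two-form, all under the orientation conventions fixed on p.~\pageref{p:convention}. Since this is a known classical result, I would present the structural reduction to the horizontal curvature term carefully and then either cite or reproduce the final algebraic identification of that term with $W^\pm$, rather than grind through every frame-dependent coefficient.
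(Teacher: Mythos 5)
The paper does not prove this theorem: it is recorded as a classical result and attributed to Atiyah, Hitchin and Singer \cite[Theorem 4.1]{AtiyahHitchinSinger1978}, so there is no in-paper argument to compare against. Your outline is, in substance, the original AHS proof: split $TZ^\pm=\xi^\pm\oplus T^vZ^\pm$ via the Levi-Civita connection, note that the purely vertical part of the Nijenhuis tensor vanishes because each fibre is a genuine $\CP^1$, check that the mixed terms impose no condition, and reduce the purely horizontal part to the Singer--Thorpe block decomposition of the curvature operator on $\Lambda^2=\Lambda^2_+\oplus\Lambda^2_-$, where the scalar block (a multiple of the identity) and the off-diagonal traceless-Ricci block drop out and only the trace-free symmetric endomorphism $W^\pm$ of $\Lambda^2_\pm$ survives; its vanishing at every fibre point is then equivalent to $W^\pm=0$. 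That is the correct structure and the correct mechanism, and the deduction of the $Z^-$ case from the $Z^+$ case via $Z^-(X)=Z^+(\overline X)$ is consistent with Proposition \ref{prop:summary}(a). What the paper buys by citing is brevity; what your route buys is an actual proof, at the cost of a frame computation you yourself defer at the crucial step.

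Two cautions if you carry this out. The place where such arguments silently go wrong is the final orientation bookkeeping deciding whether the surviving obstruction for $Z^+=S(\Lambda^2_+(TX))$ is $W^+$ or $W^-$: several standard references build the twistor space from $\Lambda^2_-$ and state integrability as $W^+=0$, so you must run the identification with the convention fixed on p.~\pageref{p:convention} (outward orientation on $S(\Lambda^2_+)$, inward on $S(\Lambda^2_-)$) and confirm that the outcome matches the statement as given here, which is the version used in the proof of Theorem \ref{th:main}. Also, ``the mixed horizontal-vertical terms impose no condition'' is itself a computation, not a formality — it uses that parallel transport preserves $\Jscr^\pm(T_xX)$ and that $J^\pm$ restricts to the integrable structure on each fibre — so it should be verified or cited alongside the horizontal term rather than asserted.
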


Let us say that $(X,g)$ is $\pm$ self-dual if $W^\pm =0$.
The next result is due to Salamon \cite[Theorem 10.1]{Salamon1983}; see also  
Eells and Salamon \cite[Theorem 4.2]{EellsSalamon1985}. 

%
%
\begin{theorem}\label{th:Eells}
Assume that $(X,g)$ is a $\pm$ self-dual Riemannian four-manifold, so $(Z^\pm,J^\pm)$ is a complex manifold. 
Then, the horizontal bundle $\xi^\pm$ is a holomorphic hyperplane subbundle of $TZ^\pm$ if 
and only if $X$ is an Einstein manifold. Assuming that this holds, $\xi^\pm$ is a holomorphic contact bundle 
if and only if the scalar curvature of $X$ (the trace of the Ricci curvature) is nonzero. 
\end{theorem}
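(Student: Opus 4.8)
The plan is to carry out the computation on the oriented orthonormal frame bundle $\pi_P\colon P\to X$, an $SO(4)$-principal bundle carrying the soldering form $\vartheta\in\Omega^1(P;\R^4)$ and the Levi-Civita connection form $\omega\in\Omega^1(P;\mathfrak{so}(4))$, with structure equations $d\vartheta=-\omega\wedge\vartheta$ and $d\omega=-\omega\wedge\omega+\Omega$; here $\Omega$ is the curvature $2$-form, whose associated curvature operator $\Rcal\colon\Lambda^2(TX)\to\Lambda^2(TX)$ decomposes, relative to the splitting $\Lambda^2=\Lambda^2_+\oplus\Lambda^2_-$ of Section \ref{sec:H}, in the block form
\[
\Rcal=\begin{pmatrix} W^++\tfrac{s}{12}\,\Id & \mathring r\\ \mathring r^{\,t} & W^-+\tfrac{s}{12}\,\Id\end{pmatrix},
\]
with $s$ the scalar curvature, $\mathring r$ the trace-free Ricci tensor (the off-diagonal blocks), and $W^\pm$ the two Weyl pieces. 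I treat the case of $Z^+$ and assume $W^+=0$, so that $(Z^+,J^+)$ is a complex threefold by Theorem \ref{th:Atiyah}; the statement for $Z^-$ then follows by reversing the orientation on $X$, since $Z^-(X)=Z^+(\overline X)$ by Proposition \ref{prop:summary}(a).

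First I would fix a local defining form for $\xi^+$. As $d\pi^+\colon\xi^+\to TX$ is an isomorphism and $\xi^+$ is a $J^+$-invariant subbundle of complex codimension one, its $(1,0)$-part $(\xi^+)^{1,0}$ is the kernel of a nowhere-vanishing $(1,0)$-form $\theta$ with values in the vertical complex line bundle $T^vZ^+$; concretely $\theta$ is the $(1,0)$-component of the vertical projection determined by the connection. The decisive identity, extracted from the second structure equation, is that on horizontal arguments the vertical part of $d\theta$ is the curvature evaluated horizontally: at a point $z=(x,j)$ the fibre is the round sphere $S(\Lambda^2_+(T_xX))$ with tangent space $j^\perp\subset\Lambda^2_+$, the horizontal space $\xi^+_z\cong T_xX$ carries the complex structure $j$, and
\[
d\theta(u,v)=\bigl(\Rcal(u\wedge v)\bigr)^+\ \text{projected to}\ j^\perp,\qquad u,v\in\xi^+_z,
\]
followed by passage to the vertical $(1,0)$-part. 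By the block decomposition this equals $\tfrac{s}{12}(u\wedge v)^+$ plus $\mathring r\bigl((u\wedge v)^-\bigr)$, projected to $j^\perp$.

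Both equivalences then fall out of a type count with respect to $j$, using the orthogonal decompositions $\Lambda^2_+\otimes\C=\C\,\omega_j\oplus\Lambda^{2,0}\oplus\Lambda^{0,2}$ and $\Lambda^2_-\otimes\C\cong\Lambda^{1,1}_0$, where $\omega_j$ is the fundamental form of $j$ — note $\omega_j$ is exactly the radial direction in $\Lambda^2_+$ that the fibre projection to $j^\perp$ annihilates. For $u,v\in(\xi^+)^{1,0}$ one has $u\wedge v\in\Lambda^{2,0}\subset\Lambda^2_+$, so $(u\wedge v)^-=0$ and the Ricci term drops out, leaving $d\theta(u,v)=\tfrac{s}{12}\,u\wedge v$; this is the $(2,0)$-part $\partial\theta$, and in complex dimension three a nowhere-zero holomorphic form is contact precisely when $d\theta|_{\ker\theta}$ is nondegenerate, which here holds if and only if $s\neq0$. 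For a mixed pair $u\in(\xi^+)^{1,0}$, $\bar v\in(\xi^+)^{0,1}$ one has $u\wedge\bar v\in\Lambda^{1,1}$, whose self-dual part is a multiple of $\omega_j$ and hence projects to $0$ in $j^\perp$ — killing the scalar term — while its anti-self-dual part lies in $\Lambda^{1,1}_0=\Lambda^2_-$ and is acted on by $\mathring r$. Thus the $(1,1)$-part $\bar\partial\theta$ is exactly the trace-free Ricci contribution, so $\theta$ can be chosen holomorphic, and $\xi^+$ is a holomorphic subbundle, if and only if $\mathring r=0$, i.e. if and only if $(X,g)$ is Einstein.

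The main obstacle, and essentially the only nonroutine step, is the curvature bookkeeping behind the displayed identity for $d\theta$: one must trivialise the vertical line $T^vZ^+$ consistently over a neighbourhood, verify that the vertical component of the horizontal curvature is indeed the $j^\perp$-projection of $\Rcal^+$, and — most delicately — keep the orientation conventions of p.\ \pageref{p:convention} straight, since these fix which of $\Lambda^{2,0},\Lambda^{0,2}$ is the holomorphic vertical direction and thereby guarantee that the scalar curvature surfaces as the $(2,0)$ contact term while the trace-free Ricci surfaces as the $(1,1)$ obstruction. Once this identity and the two linear-algebra decompositions are in hand, the equivalences are immediate and the $Z^-$ case is obtained verbatim after the orientation reversal; the complex-analytic input is only Theorem \ref{th:Atiyah}.
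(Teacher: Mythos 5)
The paper does not prove Theorem \ref{th:Eells} at all: it is quoted as a known result of Salamon \cite[Theorem 10.1]{Salamon1983} and Eells--Salamon \cite[Theorem 4.2]{EellsSalamon1985}, so there is no in-paper argument to compare yours against. What you have written is, in outline, the standard proof from those references (and from Besse's treatment of twistor spaces): the Singer--Thorpe block decomposition of the curvature operator on $\Lambda^2_+\oplus\Lambda^2_-$, the identification of the vertical part of $d\theta$ on horizontal arguments with the $j^\perp$-projection of the $\Lambda^2_+$-component of $\Rcal(u\wedge v)$, and the type decompositions $\Lambda^2_+\otimes\C=\C\,\omega_j\oplus\Lambda^{2,0}\oplus\Lambda^{0,2}$, $\Lambda^2_-\otimes\C=\Lambda^{1,1}_0$, which route the scalar curvature into the $(2,0)$ (contact) slot and the trace-free Ricci tensor into the $(1,1)$ (holomorphicity) slot. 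That skeleton is correct, and your flagging of the orientation conventions as the delicate bookkeeping point is apt.

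Two places need to be shored up before this is a complete proof. First, the criterion for $\xi^+$ to be a holomorphic subbundle is the vanishing of the second fundamental form $\sigma\in\Gamma\bigl(\Lambda^{0,1}\otimes\mathrm{Hom}((\xi^+)^{1,0},\,T^{1,0}Z^+/(\xi^+)^{1,0})\bigr)$, i.e.\ of $\bar\partial\theta$ evaluated on $(\xi^+)^{1,0}\otimes T^{0,1}Z^+$ with $T^{0,1}Z^+=(\xi^+)^{0,1}\oplus (T^vZ^+)^{0,1}$; you only test the horizontal $(0,1)$ directions. The vertical contribution does vanish --- the horizontal distribution is the same real subspace along a fibre and its $(1,0)$-part varies holomorphically with $j$ by the tautological definition \eqref{eq:twistor} of $J^+$ --- but this must be verified, since it is exactly the kind of statement that could fail for a different almost complex structure on the fibres (e.g.\ the nonintegrable $J_2$). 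Second, for the ``only if'' of the Einstein equivalence you need that vanishing of the projected $(1,1)$-term at every point $j$ of every fibre forces $\mathring r=0$; this follows because $\Lambda^{1,1}_{0,j}=\Lambda^2_-\otimes\C$ is independent of $j$, so $\mathring r(\Lambda^2_-)\subset\R\,j$ for all $j\in S(\Lambda^2_+)$ is only possible if $\mathring r=0$, but the sentence should be there. With these two points added, the argument is a faithful reconstruction of the cited proof.
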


In short, the complex structures $J^\pm$ on twistor spaces $Z^\pm $ depend only on the conformal class 
of the metric on $X$, but the horizontal distribution is defined by a choice of metric in that conformal class, 
and it is holomorphic precisely when the metric is Einstein.

%
%
\begin{example}[Twistor spaces of a K\"ahler manifold]\label{ex:hermitianmanifold}
A smooth section $\sigma:X\to Z^\pm(X)$ of the twistor bundle determines an
almost hermitian structure $J_\sigma$ on $TX$ given at a point $x\in X$ by $\sigma(x)\in \Jscr^\pm(T_x X)$.
Conversely, an almost hermitian structure $J$ on $TX$ determines a section $\sigma_J:X\to Z^\pm(X)$, 
where the sign depends on whether $J$ agrees or disagrees with the orientation of $X$.
These structures are not integrable in general. 

Suppose now that $(X,g,J)$ is an integrable hermitian manifold
endowed with the natural orientation determined by $J$. Then, the associated holomorphic section 
$\sigma_J:X\to Z^+(X)$ is horizontal if and only if $(X,g,J)$ is a K\"ahler manifold. Indeed, the 
K\"ahler condition is equivalent to $J$ being invariant under the parallel transport along curves in $X$, 
which means that $\nabla J=0$. This shows that the horizontal bundle $\xi^+\subset TZ^+(X)$ 
associated to a K\"ahler manifold $X$ is never a holomorphic contact bundle. 
(Note also that $Z^+$ is in general not an integrable complex manifold.)
Any holomorphic or antiholomorphic curve in $X$ is a superminimal surface of positive spin since  
$\sigma_J$ provides a horizontal lift to $Z^+$. Another type of superminimal surfaces
of positive spin are the Lagrangian ones, i.e., those for which the image of the tangent space 
at any point by the complex structure $J$ is orthogonal to itself. If the holomorphic sectional curvature 
of $X$ is nonvanishing then any superminimal surface of positive spin in $X$ is of one of these three types 
(see \cite{EellsSalamon1985}).

On the K\"ahler manifold $\R^4=\C^2$ with the flat metric, $\xi^+$ is involutive 
(cf.\ Example \ref{ex:R4}). The twistor space $Z^+(\CP^2)$ of the projective plane 
is not integrable, and the superminimal surfaces in $\CP^2$ of positive spin are described above.
On the other hand, $Z^-(\CP^2)$ is integrable and can be identified
with the projectivised tangent bundle of $\CP^2$. There is a natural correspondence between 
superminimal surfaces of negative spin in $\CP^2$ and holomorphic curves in 
$\CP^2$ (see Gauduchon \cite[p.\ 178]{Gauduchon1987A}).
Superminimal surfaces in $\CP^2$ (and in $S^4$) were also studied by 
Montiel and Urbano \cite{MontielUrbano1997} and others.
\qed \end{example}

%
%
%
%
\section{Proofs of Theorems \ref{th:main} and \ref{th:main2}}\label{sec:proof}

%
%
\begin{proof}[Proof of Theorem \ref{th:main}]
Let $(X,g)$ be a Riemannian manifold satisfying the hypotheses of Theorem \ref{th:main}.
Let $W=W^++W^-$ denote the Weyl tensor of $X$ (see \cite[p.\ 427]{AtiyahHitchinSinger1978}). 
Assume without loss of generality that $X$ is self-dual, meaning that $W^-=0$; the analogous argument applies 
if $W^+=0$ by reversing the orientation on $X$ (see Proposition \ref{prop:summary} (a)). 
Denote by $\pi:Z=Z^-(X)\to X$ the negative twistor space of $X$ and by $\xi\subset TZ$ its horizontal bundle
(see Sect.\ \ref{sec:twistor}). Also, let $\tilde g$ denote a metric on $Z$
for which the differential $d\pi:TZ \to TX$ maps $\xi$ isometrically onto $TX$.
Such $\tilde g$ is obtained by adding to the horizontal component $\pi^* g$ a positive 
multiple $\lambda>0$ of the spherical metric on $\CP^1$. 
By Theorems \ref{th:Atiyah} and \ref{th:Eells}, $Z$ is an integrable complex manifold 
and the horizontal bundle $\xi$ is a holomorphic subbundle of the tangent bundle $TZ$.

Let $M$ be a relatively compact domain with smooth boundary 
in an ambient Riemann surface $R$, and let $f_0:\overline M\to X$ be 
a conformal superminimal immersion of negative spin, $f_0\in \SM^-(\overline M,X)$,
and of class $\Cscr^r$ for some $r\ge 3$ (see Definition \ref{def:SM}). 
Let $F_0: \overline M\to Z$ denote the twistor lift of $f_0$
(see \eqref{eq:twistorlift}). By Remark \ref{rem:regularity} the map $F_0$ is of class 
$\Cscr^{r-1}(\overline M)$, and by Theorem \ref{th:Friedrich} its restriction to 
$M$ is a horizontal holomorphic immersion $M\to Z$.

Assume first that the (constant) scalar curvature of $X$ is nonzero, so $\xi$ is a holomorphic
contact subbundle of $TZ$. According to \cite[Theorem 1.2]{Forstneric2020Merg},
the Legendrian immersion $F_0$ can be approximated in the $\Cscr^{r-1}(\overline M)$ topology
by holomorphic Legendrian immersions $F_1:U\to Z$ from open neighbourhoods
$U$ of $\overline M$ in $R$, and we may choose $F_1$ to agree with $F_0$ at any given finite set of points 
$A\subset M$. (We assumed that $r\ge 3$ since \cite[Theorem 1.2]{Forstneric2020Merg} applies to Legendrian 
immersions of class $\Cscr^2(\overline M)$.)
Projecting down to $X$ yields a conformal superminimal immersion $f_1=\pi\circ F_1:U\to X$ 
of negative spin satisfying the conclusion of the following proposition which seems worthwhile recording.

%
%
\begin{proposition}[Mergelyan approximation theorem for superminimal surfaces]
\label{prop:approx}
Assume that $(X,g)$ is a self-dual ($W^+=0$ or $W^-=0$) Einstein four-manifold. If $\overline M$ is a 
compact domain with smooth boundary in a Riemann surface $R$ and $f_0:\overline M\to X$ is a conformal 
superminimal immersion in $\SM^\pm(\overline M,X)$ (see \eqref{eq:SM}) of class $\Cscr^{r}$ for some $r\ge 3$, 
then $f_0$ can be approximated in the $\Cscr^{r-1}(\overline M)$ topology by conformal superminimal immersions 
$f\in \SM^\pm(U,X)$ from open neighbourhoods $U$ of $\overline M$ in $R$.
Furthermore, $f$ may be chosen to agree with $f_0$ to any given finite order at any 
given finite set of points in $M$.
\end{proposition}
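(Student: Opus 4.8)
The plan is to use the Bryant correspondence to convert the statement into an approximation result for \emph{horizontal holomorphic immersions} into the twistor space, and then to distinguish the contact and the involutive behaviour of the horizontal distribution. As in the proof of Theorem \ref{th:main}, reversing the orientation of $X$ if necessary (by Proposition \ref{prop:summary}(a) and Remark \ref{rem:regularity}(C) this interchanges the two spins and the two Weyl components), we may assume $W^-=0$ and $f_0\in\SM^-(\overline M,X)$; set $Z=Z^-(X)$, $\pi=\pi^-$, which is integrable by Theorem \ref{th:Atiyah}, with horizontal bundle $\xi\subset TZ$ a holomorphic hyperplane subbundle by Theorem \ref{th:Eells}. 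By Remark \ref{rem:regularity} the twistor lift $F_0:\overline M\to Z$ of $f_0$ is of class $\Cscr^{r-1}$, and by Theorem \ref{th:Friedrich} its restriction to $M$ is a horizontal holomorphic immersion. Conversely, if $F:U\to Z$ is any horizontal holomorphic immersion on a neighbourhood $U$ of $\overline M$, then $dF(TU)\subset\xi$ meets the vertical tangent space of $\pi$ only at $0$, so by Lemma \ref{lem:P1} the projection $f=\pi\circ F$ is a conformal immersion with twistor lift $F$, and since $F$ is horizontal, $f\in\SM^-(U,X)$ by Theorem \ref{th:Friedrich}. As $\pi$ is smooth, a $\Cscr^{r-1}(\overline M)$-approximation of $F_0$ by horizontal holomorphic immersions $F_1:U\to Z$ projects to a $\Cscr^{r-1}(\overline M)$-approximation of $f_0$ by such $f$; moreover $F_0$ is holomorphic on $M$, hence has jets of every order at interior points, and prescribing the $m$-jet of $F_1$ to match that of $F_0$ at a finite set $A\subset M$ forces $f$ to agree with $f_0$ to order $m$ on $A$. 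Thus everything reduces to approximating $F_0$ by horizontal holomorphic immersions with jet interpolation.

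If the scalar curvature of $X$ is nonzero then, by Theorem \ref{th:Eells}, $\xi$ is a holomorphic contact bundle and horizontal holomorphic immersions are precisely holomorphic Legendrian immersions. This is exactly the case settled before the statement: the interpolation version of \cite[Theorem 1.2]{Forstneric2020Merg} provides holomorphic Legendrian immersions $F_1:U\to Z$ approximating $F_0$ in $\Cscr^{r-1}(\overline M)$ and matching its jets to the prescribed order on $A$ (the hypothesis $r\ge 3$ guaranteeing that $F_0$ is of class $\Cscr^2$, as that theorem requires).

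In the remaining case the scalar curvature vanishes, so $(X,g)$ is Einstein and Ricci-flat. Here $\xi$ is still a holomorphic hyperplane subbundle but, by Theorem \ref{th:Eells}, fails to be contact; writing $\xi=\ker\alpha$ for a local holomorphic $1$-form $\alpha$, the identity $\alpha\wedge d\alpha\equiv 0$ holds, so $\xi$ is involutive and the holomorphic Frobenius theorem foliates a neighbourhood of each point of $Z$ by complex surfaces tangent to $\xi$ (for $X=\R^4$ these are the leaves $L_z\cong\C^2$ of Example \ref{ex:R4}). Since $\overline M$ is connected, the compact set $F_0(\overline M)$ lies in a single leaf, which near it is an embedded complex surface $\Lcal\subset Z$. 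I would then invoke the Mergelyan approximation theorem for holomorphic immersions from compact bordered Riemann surfaces into an arbitrary complex manifold (cf.\ \cite{AlarconForstneric2019JAMS}) to approximate $F_0:\overline M\to\Lcal$ in $\Cscr^{r-1}(\overline M)$, with the required jet interpolation on $A$, by holomorphic immersions $F_1:U\to\Lcal\subset Z$. Being valued in a leaf of $\xi$, each $F_1$ is horizontal, and projecting by $\pi$ yields the desired $f\in\SM^-(U,X)$.

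The genuine analytic depth sits in the cited contact Mergelyan theorem, which I use as a black box; granting that, the step demanding the most care in assembling the proof is the scalar-flat case. The point to justify is that a non-contact holomorphic hyperplane subbundle arising from a scalar-flat Einstein metric is genuinely involutive rather than merely non-contact at each point (the scalar curvature being exactly the obstruction measured in Theorem \ref{th:Eells}), and that near the compact connected image $F_0(\overline M)$ one may realise the leaf as an embedded complex surface into which the approximants are pushed while remaining tangent to $\xi$. Once this local model is secured, transferring the $\Cscr^{r-1}$ topology and the finite-order jets down to $X$ through the smooth map $\pi$ is routine.
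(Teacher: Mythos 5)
Your proposal is correct and follows essentially the same route as the paper: lift to the twistor space via the Bryant correspondence, invoke the Mergelyan theorem for holomorphic Legendrian curves of \cite{Forstneric2020Merg} when the scalar curvature is nonzero so that $\xi$ is contact, and in the scalar-flat case use the involutive foliation of $Z$ by complex surfaces together with Mergelyan approximation within the leaf containing $F_0(\overline M)$. Your treatment is in fact slightly more explicit than the paper's on two points it leaves implicit — the converse direction (that any horizontal holomorphic immersion $U\to Z$ projects to an element of $\SM^\pm(U,X)$, via Lemma \ref{lem:P1} and Theorem \ref{th:Friedrich}) and the reduction of the scalar-flat case to the leaf — but these are exactly the paper's intended arguments.
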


We continue with the proof of Theorem \ref{th:main}. By \cite[Theorem 1.3]{AlarconForstneric2019IMRN}
we can approximate the holomorphic Legendrian immersion $F_1:U\to Z$ found above, uniformly
on $\overline M$, by topological embeddings $F:\overline M\to Z$ whose restrictions to $M$
are complete holomorphic Legendrian embeddings. Again, we can choose $F$ to match
$F_1$ (and hence $F_0$) at any given finite set of points in $M$. The proof of the cited theorem 
uses Darboux neighbourhoods furnished  by \cite[Theorem 1.1]{AlarconForstneric2019IMRN}, thereby 
reducing the problem to the standard contact structure on $\C^3$ for which the mentioned result is given by 
\cite[Theorem 1.2]{AlarconForstnericLopez2017CM}.

Since the differential of the twistor projection $\pi:Z\to X$ maps the horizontal bundle 
$\xi\subset TZ$ isometrically onto $TX$, the projection $f:=\pi\circ F:\overline M\to X$ is a continuous
map whose restriction to $M$ is a complete superminimal immersion $M\to X$.
By the construction, $f$ approximates $f_0$ as closely as desired uniformly on $\overline M$, 
and it can be chosen to agree with $f_0$ to any given finite order at the given finite set of points in $M$. 

By using also the general position theorem for holomorphic Legendrian 
immersions (see \cite[Theorem 1.2]{AlarconForstneric2019IMRN}) and the transversality 
argument given (for the special case of the twistor map $\CP^3\to S^4$) in 
\cite[proof of Theorem 7.5]{AlarconForstnericLarusson2019X}, we can arrange 
that the boundary $f|_{bM}:bM\to X$ is a topological embedding whose image 
consists of finitely many Jordan curves. As shown in 
\cite[proof of Theorem 1.1]{AlarconForstneric2020RMI}, we can also arrange
that the Jordan curves in $f(bM)$ have Hausdorff dimension one. 

It remains to consider the case when the manifold $(X,g)$ has vanishing scalar curvature. 
By Theorem \ref{th:Eells} the horizontal distribution $\xi$ on the twistor space $Z$ is then  
an involutive holomorphic subbundle of codimension one in $TZ$, hence defining a holomorphic foliation of 
$Z$ by smooth complex surfaces. The (horizontal, holomorphic) twistor lift $F_0$ of $f_0$ lies in a leaf of this foliation. 
It is known (see \cite{AlarconForstneric2019JAMS,AlarconForstneric2020RMI}) that 
complex curves parameterized by bordered Riemann surfaces 
in any complex manifold of dimension $>1$ enjoy the Calabi-Yau property. Projecting such a surface 
(contained in the same leaf of $\xi$ as $F_0(M)$) to $X$ gives an immersed complete superminimal surface, 
and we can arrange by a general position argument (see the proof of Theorem \ref{th:main}) that its 
boundary is topologically embedded.
\end{proof}

The argument in the above proof gives the following lemma.

%
%
\begin{lemma}[Increasing the intrinsic diameter of a superminimal surface]
\label{lem:diameter}
Let $M$ and $(X,g)$ be as in Theorem \ref{th:main}. Every conformal superminimal immersion
$f_0\in \SM^\pm(\overline M,X)$ of class $\Cscr^3$ can be approximated as closely as desired 
uniformly on $\overline M$ by a smooth conformal superminimal immersion $f\in \SM^\pm(\overline M,X)$
with embedded boundary $f(bM)\subset X$ such that the intrinsic diameter of the Riemannian surface 
$(M,f^*g)$ is arbitrarily big.
\end{lemma}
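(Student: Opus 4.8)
The plan is to re-run the proof of Theorem~\ref{th:main}, but to halt after a single diameter-increasing step instead of iterating to a complete limit. First I would reduce to the twistor picture exactly as there: assuming without loss of generality that $X$ is self-dual, set $Z=Z^-(X)$ with its integrable complex structure and horizontal bundle $\xi$, and equip $Z$ with the metric $\tilde g$ for which $d\pi\colon\xi\to TX$ is a fibrewise isometry. By Theorem~\ref{th:Friedrich} the twistor lift $F_0\colon\overline M\to Z$ of $f_0$ restricts on $M$ to a horizontal holomorphic immersion and is of class $\Cscr^{2}$ up to the boundary. The observation I would stress at the outset is that, because any Legendrian competitor $F$ is tangent to $\xi$ and $d\pi|_\xi$ is an isometry, the induced metrics satisfy $f^*g=F^*\tilde g$ for $f=\pi\circ F$; hence the intrinsic diameter of $(M,f^*g)$ equals that of $(M,F^*\tilde g)$, and it suffices to enlarge the latter upstairs.

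Assuming the scalar curvature of $X$ is nonzero, so that $\xi$ is a holomorphic contact bundle by Theorem~\ref{th:Eells}, I would apply the Mergelyan approximation of Proposition~\ref{prop:approx}, i.e.\ \cite[Theorem 1.2]{Forstneric2020Merg}, to replace $F_0$ by a holomorphic Legendrian immersion $F_1$ defined on a neighbourhood of $\overline M$. The core of the argument is then the single completeness step underlying \cite[Theorem 1.3]{AlarconForstneric2019IMRN} (built, through Darboux charts, on \cite[Theorem 1.2]{AlarconForstnericLopez2017CM}): given a target $d>0$, a base point $p_0\in M$, and $\varepsilon>0$, this step produces a holomorphic Legendrian immersion $F$ on a neighbourhood of $\overline M$ with $\|F-F_0\|_{\Cscr^0(\overline M)}<\varepsilon$ and with $\tilde g$-distance from $p_0$ to $bM$ in $(M,F^*\tilde g)$ exceeding $d$. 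This is precisely the labyrinth-plus-Riemann–Hilbert deformation that is iterated to obtain the Calabi–Yau property; carrying it out once keeps $F$ holomorphic across $bM$, so $f=\pi\circ F$ is a smooth conformal superminimal immersion in $\SM^-(\overline M,X)$, while its induced metric forces the intrinsic diameter past $d$.

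Finally I would push the conclusion downstairs: by the isometry remark above $(M,f^*g)$ has intrinsic diameter $>d$, and $f$ approximates $f_0$ uniformly on $\overline M$ since $F$ approximates $F_0$ and $\pi$ is Lipschitz. To obtain the embedded boundary I would invoke the same general-position/transversality argument used in Theorem~\ref{th:main} (following \cite[proof of Theorem 7.5]{AlarconForstnericLarusson2019X} and \cite[proof of Theorem 1.1]{AlarconForstneric2020RMI}), applied to $F|_{bM}$; being a generic small perturbation it does not spoil the diameter estimate. When the scalar curvature vanishes instead, $\xi$ is involutive (Theorem~\ref{th:Eells}) and $F_0(M)$ lies in a leaf that is a complex surface, so there I would run the analogous single completeness step for holomorphic curves in a complex manifold of dimension $>1$ (see \cite{AlarconForstneric2019JAMS,AlarconForstneric2020RMI}) inside that leaf, again projecting by the isometry $d\pi|_\xi$.

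The main obstacle I anticipate is isolating a clean \emph{finite} step from the inductive completeness scheme: the constructions in \cite{AlarconForstneric2019IMRN,AlarconForstnericLopez2017CM} are presented as limits producing immersions defined only on the open surface $M$, so I would have to verify that a single deformation can be taken holomorphic, hence smooth, across $bM$ while simultaneously realising the prescribed intrinsic distance $d$ and the $\Cscr^0$-approximation, and that the subsequent general-position perturbation for the embedded boundary respects all of these constraints. Every other ingredient is either immediate or already established in the proof of Theorem~\ref{th:main}.
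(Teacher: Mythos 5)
Your proposal is correct and follows essentially the same route as the paper, whose entire proof of this lemma is the remark that ``the argument in the above proof [of Theorem \ref{th:main}] gives the following lemma''; your reconstruction --- lifting to the twistor space, using that $d\pi|_\xi$ is an isometry so that $f^*g=F^*\tilde g$, applying the Mergelyan step, performing a single diameter-increasing deformation holomorphic across $bM$, and finishing with the general-position argument for the boundary (with the involutive-leaf variant when the scalar curvature vanishes) --- is precisely the intended argument. The single finite step you worry about is the standard intermediate lemma in the cited Calabi--Yau constructions, so no genuine gap remains.
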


By an inductive application of this lemma, we obtain the following generalisation of Theorem \ref{th:main}.
Let $R$ be a compact Riemann surface and $M=R\setminus \bigcup_{i=0}^\infty D_i$
be an open domain of the form \eqref{eq:countable} in $R$ whose complement is a countable union of pairwise disjoint, 
smoothly bounded closed discs $D_i$. For every $j\in \Z_+$ we consider the compact domain in $R$ given by 
$
	M_j=R\setminus\bigcup_{k=0}^j \mathring D_k.
$
This is a compact bordered Riemann surface with boundary $bM_j=\bigcup_{k=0}^j bD_k$, and
$
	M_0 \supset M_1\supset M_2\supset \cdots \supset \bigcap_{j=1}^\infty M_j = \overline M.
$

%
%
\begin{theorem}[Assumptions as above] \label{th:main2}
Assume that $(X,g)$ is an Einstein four-manifold with the Weyl tensor $W=W^++W^-$.
If $W^\pm=0$ then every $f_j \in \SM^\pm(M_j,X)$ $(j\in \Z_+)$ of class $\Cscr^3$ can be approximated 
as closely as desired uniformly on $\overline M$ by continuous maps $f:\overline M\to X$ such that 
$f:M\to X$ is a complete conformal superminimal immersion in $\SM^\pm(M,X)$ 
and $f(bM)=\bigcup_i f(bD_i)$ is a union of pairwise disjoint Jordan curves of Hausdorff dimension one. 
\end{theorem}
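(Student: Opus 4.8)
The plan is to realise $f$ as the uniform limit on $\overline M$ of a sequence obtained by iterating Lemma \ref{lem:diameter} along the nested exhaustion $M_0\supset M_1\supset\cdots\supset\overline M$. After relabelling the indices we may assume the prescribed immersion is $f_0\in\SM^\pm(M_0,X)$ of class $\Cscr^3$. Fix a point $p_0\in\overline M$ and a summable sequence $\epsilon_j>0$ with $\sum_j\epsilon_j<\infty$. Inductively, having constructed a smooth $f_{j-1}\in\SM^\pm(M_{j-1},X)$, I would apply Lemma \ref{lem:diameter} to the restriction $f_{j-1}|_{M_j}$ (a $\Cscr^3$ superminimal immersion on the compact bordered surface $M_j$) to produce a smooth $f_j\in\SM^\pm(M_j,X)$ with: (i) $\|f_j-f_{j-1}\|_{\Cscr^0(M_j)}<\epsilon_j$; (ii) $f_j(bM_j)$ an embedded union of finitely many pairwise disjoint Jordan curves of Hausdorff dimension one; and (iii) the intrinsic distance $\dist_{f_j^*g}(p_0,bM_j)$ from $p_0$ to the whole boundary larger than $j$. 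Since the output of the lemma is smooth, its restriction to the smaller surface $M_{j+1}$ is again admissible, so the recursion continues.

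Because $\overline M\subset M_j$ for every $j$ and $\sum_j\epsilon_j<\infty$, the maps $f_j|_{\overline M}$ are uniformly Cauchy and converge to a continuous $f:\overline M\to X$ with $\|f-f_0\|_{\Cscr^0(\overline M)}\le\sum_{j\ge1}\epsilon_j$, which can be made as small as desired. On any compact $K\subset M$ we have $K\subset\mathring M_j$ for all large $j$; by additionally requiring the approximation at each stage to be $\Cscr^2$-close on such compacta (the lemma controls the $\Cscr^{r-1}$ norm, with $r\ge3$), we get $f_j\to f$ in $\Cscr^2$ on $K$, so $f|_M$ is a conformal immersion, and its twistor lifts $F^\pm_{f_j}$ converge in $\Cscr^1$ to a horizontal holomorphic lift of $f$. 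By the Bryant correspondence (Theorem \ref{th:Friedrich}) this gives $f\in\SM^\pm(M,X)$.

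The distance condition (iii) forces completeness. For each fixed $i$ and every $j\ge i$ the circle $bD_i$ is a component of $bM_j$, whence $\dist_{f_j^*g}(p_0,bD_i)\ge\dist_{f_j^*g}(p_0,bM_j)>j$. Choosing the errors $\epsilon_j$ (and the attendant $\Cscr^1$-smallness) so small that the metrics $f_j^*g$ remain uniformly comparable and these lower bounds are essentially preserved at all later stages, the limiting distance function $\rho(x)=\dist_{f^*g}(p_0,x)$ becomes proper on $M$: any sequence leaving every compact subset of $M$ accumulates, by compactness of $R$, at a point of some boundary circle $bD_i\subset bM$, near which $\rho$ is arbitrarily large; hence $(M,f^*g)$ is complete. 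The same smallness, chosen at stage $j$ only after inspecting the finitely many boundary curves already created and taken below their mutual separations and their $\Cscr^1$ injectivity scales, guarantees that each $f_k|_{bD_i}$ stabilises to an embedding, that distinct curves stay disjoint, and that the Hausdorff dimension one is retained, so in the limit $f(bM)=\bigcup_i f(bD_i)$ is a union of pairwise disjoint Jordan curves of Hausdorff dimension one.

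The principal difficulty is precisely this limiting bookkeeping over infinitely many, and in general accumulating, boundary components: one must run a diagonal argument in which $\epsilon_j$ is selected only after the stage-$(<j)$ data is known, simultaneously (a) not spoiling the distance-to-boundary bounds already achieved for the fixed circles $bD_i$, (b) keeping the previously embedded boundary curves disjoint and Jordan in the limit, and (c) preserving the Hausdorff-dimension-one property. A secondary, but genuine, point is that Lemma \ref{lem:diameter} is phrased in terms of the intrinsic \emph{diameter}, whereas (iii) demands the distance from a fixed point to the \emph{entire} boundary; this sharper statement is the actual completeness conclusion delivered by \cite[Theorem 1.3]{AlarconForstneric2019IMRN} underlying the lemma, and one should invoke that form, together with the Hausdorff-dimension control of \cite{AlarconForstneric2020RMI}, rather than the diameter packaging alone.
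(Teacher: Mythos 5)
Your proposal follows essentially the same route as the paper: an inductive application of Lemma \ref{lem:diameter} along the exhaustion $M_0\supset M_1\supset\cdots$, with summably small uniform errors, convergence on $\overline M$, and the limiting bookkeeping (which the paper delegates to \cite[proof of Theorem 5.1]{AlarconForstneric2020RMI}) to preserve completeness and the disjoint Jordan boundary curves of Hausdorff dimension one. Your observation that completeness requires controlling the intrinsic distance from a fixed interior point to the entire boundary, rather than the diameter packaging alone, correctly identifies the sharper estimate used in the cited detailed argument.
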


%
%
\begin{proof} 
We outline the main idea and refer for the details to \cite[proof of Theorem 5.1]{AlarconForstneric2020RMI} 
where the analogous result is proved for conformal minimal surfaces in Euclidean spaces. 

Let $f_j\in \SM^\pm(M_j,X)$ be a smooth conformal superminimal immersion. 
Using Lemma \ref{lem:diameter} we inductively construct a sequence $f_i\in \SM^\pm(M_i,X)$ 
$(i=j+1,j+2,\ldots$) such that at every step the map $f_i:M_i\to X$ approximates 
the previous map $f_{i-1}:M_{i-1}\to X$ uniformly on $M_i \subset M_{i-1}$ as closely as desired, 
the intrinsic diameter of $(M_i,f_i^* g)$ is a big as desired, and the boundary $f_i(bM_i)\subset X$ is embedded.
(Note that at each step a new disc is taken out and hence an additional boundary curve appears.) 
By choosing the approximations to be close enough at every step and the intrinsic diameters of the 
Riemannian surfaces $(M_i,f_i^* g)$ growing fast enough, the sequence $f_i$ converges uniformly on $\overline M$ 
to a limit $f=\lim_{i\to\infty}f_i: \overline M\to X$ satisfying the conclusion of the theorem. 
For the details of this argument in an analogous situation we refer to \cite[Sect.\ 3]{AlarconForstneric2020RMI}.
\end{proof}

%
%

\section{Twistor spaces of the $4$-sphere and of the hyperbolic $4$-space}\label{sec:S4}

It was shown by Penrose \cite[Sect.\ VI]{Penrose1967}, and more explicitly by 
Bryant \cite[Sect.\ 1]{Bryant1982JDG}  that the twistor space 
of the four-sphere $S^4$ with the spherical metric can be identified with the complex projective space 
$\CP^3$ with the Fubini-Study metric 
(defined by the homogeneous $(1,1)$-form $\omega =  dd^c \log|z|^2$ on $\C^4_*$) 
such that the horizontal distribution $\xi\subset T\CP^3$ 
of the twistor projection $\pi:\CP^3\to S^4$ is a holomorphic contact bundle given
in homogeneous coordinates $[z_1:z_2:z_3:z_4]$ by the homogeneous $1$-form 
\begin{equation}\label{eq:alpha}
	\alpha = z_1dz_2-z_2dz_1+z_3dz_4-z_4dz_3.
\end{equation}
(This complex contact structure $\CP^3$ is unique up to holomorphic contactomorphisms; 
see LeBrun and Salamon \cite[Corollary 2.3]{LeBrunSalamon1994IM}.)
Proofs can also be found in many other sources, see Eells and Salamon 
\cite[Sect.\ 9]{EellsSalamon1985}, Gauduchon \cite[pp.\ 170-175]{Gauduchon1987A},
Bolton and Woodward \cite{BoltonWoodward2000}, Baird and Wood \cite[Example 7.1.4]{BairdWood2003}, 
among others. 

Due to the overall importance of this example we offer here a totally elementary explanation using only  
basic facts along with Lemma \ref{lem:complexstructures}. We consider $Z^+(S^4)$; 
the same holds for $Z^-(S^4)$ by applying \eqref{eq:reversing} to the antipodal orientation 
reversing isometry on $S^4$. In Example \ref{ex:H4} we also take a look at the twistor
space of the hyperbolic four-space $H^4$.

%
%
\begin{example}[The twistor space of $S^4$]\label{eq:S4}
The geometric scheme follows Bryant \cite{Bryant1982JDG} and
Gauduchon \cite[p.\ 171--175]{Gauduchon1987A}, \cite{Gauduchon1987B}. 
We identify the quaternionic plane $\H^2$ with $\C^4$ by 
\begin{equation}\label{eq:H2C4}
	\H^2\ni q=(q_1,q_2)=(z_1+z_2\jgot,z_3+z_4\jgot) = (z_1,z_2,z_3,z_4)=z \in \C^4,
\end{equation}
and we identify $S^4$ with the unit sphere in $\R^5=\C\oplus \C\oplus\R$
oriented by the outward vector field. Write $\H^2_*=\H^2 \setminus \{0\}$ and consider the 
commutative diagram
\[ 
	\xymatrix{
	\C^4_* \ar[r]^{\cong} &  \H^2_* \ar[r]^{\phi_1} \ar[d]_{\phi} & \CP^3 \ar[ld]_{\phi_2} \ar[d]^{\pi}
	\\ 
	\!\!\!\!\!\!\!\!\!\!\!\!\!\C^2\cup\{\infty\} \ar[r]^{\cong} & \H\P^1  \ar[r]^{\psi} & S^4 
	}
\]
where
\begin{itemize}
\item $\phi_1:\H^2_*\cong\C^4_*\to \CP^3$ is the canonical projection with fibre $\C^*$
sending $q=(q_1,q_2)\in \H^2_*$ to the complex line $\C q\in \CP^3$;
\item $\phi_2:\CP^3\to \HP^1$ is the fibre bundle sending a complex line $\C q$, $q\in \H^2_*$, to the 
quaternionic line $\H q=\C q \oplus \C \jgot q$. Thus, $\HP^1$ is the quaternionic one-dimensional projective 
space which we identify with $\H\cup\{\infty\} = \R^4\cup\{\infty\}$ such that 
$\H_2:=\{0\}\times \H=\H\,\cdotp (0,1)$ corresponds to $\infty$. The fibre $\phi_2^{-1}(\phi_2(q))$ is the 
linear rational curve $\CP^1\subset \CP^3$ of complex lines in the quaternionic line $\H q$;
\item $\phi=\phi_2\circ \phi_1:\H^2_* \to \HP^1$ sends $q\in \H^2_*$ to $\H q\in \HP^1$.
Restricting $\phi$ to the unit sphere $S^7\subset \H^2_*$ gives a Hopf map $S^7\to S^4$ with fibre $S^3$;
\item $\psi:\HP^1 \cong \R^4\cup\{\infty\} \to S^4\subset \R^5$ is the orientation preserving 
stereographic projection mapping $\infty$ to the south pole $\sgot = (0,0,0,0,-1)\in S^4$;
\item $\rho:=\psi\circ \phi: \H^2_*\to S^4$.
\end{itemize}
The stereographic projection $\psi:\R^4\cup\{\infty\}\to  S^4\subset \R^5$ with $\psi(\infty)=\sgot$ is given by
\begin{equation}\label{eq:psi}
	\psi(x) = \left(\frac{2x}{1+|x|^2},\frac{1-|x|^2}{1+|x|^2}\right).
\end{equation}
Using coordinates \eqref{eq:H2C4} it is elementary to find the following explicit formulas:
\begin{eqnarray}
	\label{eq:phi}
	\phi(q_1,q_2) &=& q_1^{-1}q_2=\frac{1}{|q_1|^2}\bar q_1 q_2 \cr 
			      &=& \frac{1}{|z_1|^2+|z_2|^2}\left(\bar z_1 z_3+z_2\bar z_4,\bar z_1z_4-z_2\bar z_3\right), \\
	\label{eq:rho}
	\rho(q_1,q_2) &=& \frac{1}{|q_1|^2+|q_2|^2}\left(2\bar q_1 q_2,|q_1|^2-|q_2|^2\right)\in S^4\subset \R^5, \\
	\label{eq:pi}
	\pi([z_1:z_2:z_3:z_4]) &=& \frac{1}{|z|^2} \left(
	2(\bar z_1 z_3+z_2\bar z_4),2(\bar z_1z_4-z_2\bar z_3),|q_1|^2-|q_2|^2\right).
\end{eqnarray}

We begin by considering the fibre $\pi^{-1}(\ngot) \subset \CP^3$ over the point 
$\ngot:=(0,0,0,0,1)\in S^4\subset \R^5$. 
This fibre is the space of complex lines in $\H_1:=\H\times \{0\}$ (hence isomorphic to $\CP^1$), 
and its normal space at every point in the Fubini-Study metric is $\H_2=\{0\}\times \H$. 
Using \eqref{eq:H2C4} we have that $\H_1=\{z_3=z_4=0\}$, and 
the form $\alpha$ \eqref{eq:alpha} along $\H_1$ equals $z_1dz_2-z_2dz_1$. 
It's kernel is the complex $3$-plane $\C\,\cdotp (z_1,z_2)\oplus \H_2$, so  
$\xi=\ker\alpha\subset T\CP^3$ coincides with $\H_2$ at every point of $\pi^{-1}(\ngot)$. 
This shows that $\xi$ is orthogonal to the fibre $\pi^{-1}(\ngot)$ in the Fubini-Study metric. 
We identify the tangent space $T_\ngot S^4=\R^4\times \{0\}$ 
with $\H$ and let $J_\igot\in \Jscr^+(T_\ngot S^4)$ denote the  
almost hermitian structure $J_\igot(1)=\igot$, $J_\igot(\jgot)=\kgot$.
Fix a point $q\in \H_1$ with $|q|=1$. Consider the differential
\[
	d\rho_{(q,0)} : T_{(q,0)}\H^2=\H_1\oplus \H_2\to T_\ngot S^4 \cong\H. 
\] 
We see from \eqref{eq:rho} that the restriction of $d\rho_{(q,0)}$ to the horizontal subspace
$\H_2=\xi$ equals
\[
	\H_2\ni q_2\mapsto 2\bar q q_2,
\]
so it is an isometry with an appropriate choice of the constant for the metrics.
If $J_q$ is the almost hermitian structure on $T_\ngot(S^4)\cong\H$ furnished by 
Lemma \ref{lem:complexstructures}, then 
\[
	d\rho_{(q,0)} \circ J_\igot = J_q\circ d\rho_{(q,0)} \quad \text{on $\H_2$}. 
\] 
This implies the restriction of $d\pi_{(q,0)}$ to the horizontal subspace $\H_2=\xi$
intertwines $J_\igot$ with $J_q$ as in the definition of the twistor space (see \eqref{eq:twistor}). 
Hence, $\pi:\CP^3\to S^4$ satisfies all properties of the twistor bundle 
$Z^+(S^4)\to S^4$ along the fibre $\pi^{-1}(\ngot)$. 

To complete the proof, it suffices to show that the situation is the same on every fibre of 
the projection $\pi:\CP^3\to S^4$. To this end, we must find a group of $\C$-linear isometries
of $\C^4\cong \H^2$, hence a subgroup of $U(4)$, which 
commutes with the left multiplication of $\H$ on $\H^2$ and
passes down to a transitive group of isometries of $S^4$.
This requirement is fulfilled by the subgroup of $U(4)$ 
preserving the quaternionic inner product on $\H^2$ given by 
\[
	\H^2\times \H^2\ni (p,q)\ \longmapsto\ p \bar q^t = p_1\bar q_1+p_2\bar q_2 \in\H.
\]
(We consider elements of $\H^2$ as row vectors acted upon by right multiplication.) 
Writing
\[
	p=(z_1+z_2\jgot,z_3+z_4\jgot)=z,\quad  q=(w_1+w_2\jgot,w_3+w_4\jgot)=w, 
\]
a calculation gives
\begin{equation}\label{eq:pbarq}
	p \bar q^t = z \, \overline w^t + \alpha_0(z,w) \jgot,\quad\ \alpha_0(z,w)=z_2w_1-z_1w_2+z_4w_3-z_3w_4. 
\end{equation}
Note that $\alpha_0(z,dz)=\alpha$ is the contact form \eqref{eq:alpha}. If $J_0\in SU(4)$ denotes the matrix
with $\left(\begin{smallmatrix}0&-1\\1&0\end{smallmatrix}\right)$ as the diagonal blocks and zero off-diagonal
blocks, then $\alpha_0(z,w)=z J_0 w^t$. It follows that the group we are looking for is
\[
	G = \{A\in U(4): AJ_0A^t=J_0\} = U(4) \cap Sp_2(\C),
\]
where $Sp_2(\C)$ is the complexified symplectic group. 
Its projectivization $\P\, G$ acts on $\CP^3$ by holomorphic contact isometries. 
This shows that $\CP^3$ is indeed the twistor space of $S^4$. 

Explicit formulas for the twistor lift of an immersions $M\to S^4$ into $\CP^3$ can be
found in \cite[Sect.\ 2]{Bryant1982JDG}, \cite[Sect.\ 9]{EellsSalamon1985}, 
\cite[Proposition 2.1]{BoltonWoodward2000}, 
among others. The antiholomorphic fibre preserving involution 
$\iota:\CP^3\to \CP^3$ (cf.\ Proposition \ref{prop:summary} (b)) is given by
\[
	\iota([z_1:z_2:z_3:z_4]) =  [-\bar z_2:\bar z_1:-\bar z_4:\bar z_3].
\]
The formula \eqref{eq:pi} immediately shows that $\pi\circ \iota=\Id_{S^4}$.
Identifying $S^4$ with $\R^4\cup \{\infty\}=\C^2\cup \{\infty\}$
via the stereographic projection $\psi$ \eqref{eq:psi} and using complex coordinates $w=(w_1,w_2)\in\C^2$, 
the spherical metric of constant sectional curvature $+1$ is given by 
\[
	g_s = \frac{4|dw|^2}{\left(1+|w|^2\right)^2},\quad\ w\in\C^2, 
\]
and \eqref{eq:phi} shows that the twistor projection 
$\phi_2=\psi^{-1}\circ\pi:\CP^3\to \C^2\cup \{\infty\}$ is given
in homogeneous coordinates $[z_1:z_2:z_3:z_4]$ on $\CP^3$ by
\begin{equation}\label{eq:tildepi}
	w_1=\frac{\bar z_1 z_3+z_2\bar z_4}{|z_1|^2+|z_2|^2},\quad\ 
	w_2=\frac{\bar z_1z_4-z_2\bar z_3}{|z_1|^2+|z_2|^2},	
	\quad\ |w|^2=\frac{|z_3|^2+|z_4|^2}{|z_1|^2+|z_2|^2}.
\end{equation}
\end{example}

%
%
\begin{example}[The twistor space of $H^4$]\label{ex:H4}
The geometric model of the {\em hyperbolic space} $H^4$ of constant sectional curvature $-1$ is the hyperquadric
\begin{equation}\label{eq:H4}
	H^4=\{x=(x_1,\ldots,x_5)\in \R^{5}: x_1^2+x_2^2+x_3^2+x_4^2+1=x_5^2,\ \ x_5>0\}
\end{equation}
in the Lorentzian space $\R^{4,1}$, that is, $\R^5$ endowed with the Lorenzian inner product 
\[
	x\circ y = x_1y_1+\cdots+x_4y_4-x_5y_5.
\]
(See Ratcliffe \cite[Sect.\ 4.5]{Ratcliffe2006}.) Note that $H^4$ is one of the two connected 
components of the unit sphere $\{x\in \R^{4,1}:x\circ x=-1\}$ of imaginary radius $\igot=\sqrt{-1}$,
the other component being given by the same equation \eqref{eq:H4} with $x_5<0$.

Consider the stereographic projection 
$\tilde \psi:\B=\{x\in \R^4:|x|^2<1\} \stackrel{\cong}{\to} H^4$ given by 
\begin{equation}\label{eq:psih}
	\tilde\psi(x) = \left(\frac{2x_1}{1-|x|^2},\cdots, \frac{2x_4}{1-|x|^2},\frac{1+|x|^2}{1-|x|^2}\right),
		\quad x\in \B.
\end{equation}
The pullback by $\tilde \psi$ of the Lorentzian pseudometric $\|x\|^2 = x\circ x$ on $\R^{4,1}$ is 
the hyperbolic metric of constant curvature $-1$ on the ball $\B$:
\[ 
	g_h =  \frac{4|dx|^2}{\left(1-|x|^2\right)^2},\quad\ x\in \B.
\] 
The Riemannian manifold $(\B,g_h)$ is the {\em Poincar\'e ball model} for $H^4$. We see from 
\eqref{eq:tildepi} that the preimage of $\B$ by the projection $\phi_2:\CP^3\to \C^2\cup\{\infty\}$ is the domain 
\begin{equation}\label{eq:Omega}
	\Omega=\phi_2^{-1}(\B) = \left\{[z_1:z_2:z_3:z_4]\in \CP^3: |z_1|^2+|z_2|^2 > |z_3|^2+|z_4|^2\right\}.
\end{equation}
Since the hyperbolic metric is conformally flat, $\Omega$ is the twistor space $Z^+(H^4)$ as a complex 
manifold (cf.\ Theorem \ref{th:Atiyah}). The twistor metric $\tilde g$ on $\Omega$ is obtained from the 
hyperbolic metric $g_h$ on the base $\B$ and the Fubini-Study metric on the fibres $\CP^1$. Explicit 
formulas for the metric $\tilde g$ and the horizontal bundle $\tilde \xi \subset T\Omega$ 
can be found in \cite[Sect.\ 4]{Friedrich1997}. 
(In the cited paper, the opposite inequality is used in \eqref{eq:Omega} which amounts to interchanging the 
variables $q_1,q_2$ in \eqref{eq:phi}, i.e., passing to another affine coordinate chart of $\HP^1$.)
The metric $\tilde g$ on $\Omega$ is a complete K\"ahler metric, and $\tilde \xi$ is a holomorphic contact bundle.

\begin{corollary}\label{cor:H4}
Superminimal surfaces of both positive and negative spin in the hyperbolic $4$-space $H^4$ 
satisfy the Calabi-Yau property. Furthermore, the twistor contact manifold $(\Omega,\tilde\xi)$ of $H^4$ is 
Kobayashi hyperbolic. The same holds for domains in any complete Riemannian four-manifold of 
constant negative sectional curvature (a space-form).
\end{corollary}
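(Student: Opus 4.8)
The plan is to read off the three defining features of $H^4$ — conformal flatness, the Einstein condition, and nonvanishing scalar curvature — and then to feed the Ahlfors--Schwarz lemma the induced metric of a projected Legendrian disc. For the first assertion I would note that a manifold of constant sectional curvature is conformally flat, so its Weyl tensor vanishes identically, $W = W^+ + W^- = 0$, and in particular both $W^+ = 0$ and $W^- = 0$; moreover $H^4$ is Einstein, with $\mathrm{Ric} = -3\,g_h$ and scalar curvature $-12 \neq 0$. Theorem \ref{th:main} therefore applies with \emph{both} signs, which is exactly the Calabi--Yau property for superminimal surfaces of positive and of negative spin, while Theorem \ref{th:Eells} guarantees (as already recorded) that the horizontal bundle $\tilde\xi\subset T\Omega$ of $\Omega=Z^+(H^4)$ is a holomorphic contact bundle.

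The substance of the corollary is the hyperbolicity statement, and its engine is a single curvature estimate. By the defining property of $\tilde g$ the differential $d\pi$ restricts to a fibrewise isometry $\tilde\xi\to TH^4$, so for any holomorphic Legendrian disc $\phi:\D\to(\Omega,\tilde\xi)$ one has $\phi^*\tilde g=(\pi\circ\phi)^*g_h$, where $f:=\pi\circ\phi$ is by the Bryant correspondence (Theorem \ref{th:Friedrich}) a positive-spin superminimal, hence minimal, map into $H^4$. The Gauss equation bounds the Gaussian curvature of $f^*g_h$ from above by the ambient sectional curvature $-1$, so wherever $f$ is immersed this curvature is $\le -1$. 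The Ahlfors--Schwarz lemma then yields $\phi^*\tilde g\le\lambda_\D$ for the Poincar\'e metric $\lambda_\D$ of curvature $-1$. Since the contact Kobayashi--Royden pseudometric of $(\Omega,\tilde\xi)$ — defined by the usual infimum taken over holomorphic Legendrian discs only — is the largest pseudometric making every such disc distance-decreasing, this comparison forces it to dominate $\tilde g$, a genuine Riemannian metric; hence $(\Omega,\tilde\xi)$ is Kobayashi hyperbolic. It is worth emphasising that $\Omega$ is \emph{not} hyperbolic as a bare complex manifold, since by \eqref{eq:Omega} it contains the twistor fibres $\CP^1$ over the points of $H^4$, so restricting to Legendrian discs is essential. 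Applying the very same Gauss bound directly to an arbitrary minimal immersion $M\to H^4$ shows that $f^*g_h$ is a conformal metric of curvature $\le -1$ on $M$; by Ahlfors--Schwarz again $M$ admits no nonconstant holomorphic image of $\C$, so it is conformally hyperbolic, i.e.\ uniformised by $\D$.

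For a general space-form of constant negative curvature I would first rescale the metric to curvature $-1$, after which every point has a neighbourhood isometric to an open subset of $H^4$; such a manifold is again conformally flat, Einstein, and of nonzero scalar curvature, so Theorem \ref{th:main} applies with both signs and, by the locality remark following that theorem, descends to any domain. Both the Gauss-equation bound and the Ahlfors--Schwarz comparison are pointwise and purely intrinsic, so the hyperbolicity of the associated twistor contact manifold and the uniformisation of its minimal surfaces follow exactly as for $H^4$.

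The step that will need the most care is the contact hyperbolicity argument: one must pin down the precise definition of the Legendrian Kobayashi--Royden pseudometric, deal with the branch points of the projected disc $\pi\circ\phi$ at which the pulled-back pseudometric degenerates and the Gauss bound is unavailable, and check that the Ahlfors--Schwarz comparison delivers a \emph{uniform} lower bound for the contact pseudometric in terms of $\tilde g$, rather than a merely pointwise inequality.
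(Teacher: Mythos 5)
Your proposal is correct and follows essentially the same route as the paper: conformal flatness, the Einstein condition and nonzero scalar curvature give the Calabi--Yau property for both spins via Theorems \ref{th:main} and \ref{th:main2}, and the hyperbolicity of $(\Omega,\tilde\xi)$ comes from the Gauss-equation bound (curvature of the induced metric $\le -1$) combined with the Ahlfors--Schwarz lemma applied to projected Legendrian discs. The only cosmetic difference is in the space-form case, where you rescale and invoke locality while the paper passes to the universal metric cover $H^4$; these are equivalent, and the branch-point caveat you flag at the end is likewise present but unaddressed in the paper's own (terse) argument.
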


For the notion of Kobayashi hyperbolicity of complex contact manifolds, see \cite{Forstneric2017JGEA}.

\begin{proof}
The first statement follows directly from Theorems \ref{th:main} and \ref{th:main2}. 
Let $M$ be a Riemann surfaces and $f:M\to (H^4,g_h)$ be a conformal minimal immersion. 
The induced metric $f^*g_h$ on $M$ is then a K\"ahler metric with curvature bounded above by $-1$,  
the curvature of $H^4$ (see \cite[Corollary 2.2]{ChenBangYen2017}). 
By the Ahlfors lemma (see \cite[Theorem 2.1, p.\ 3]{Kobayashi2005}) it follows that any holomorphic map
$h:\D=\{z\in \C:|z|<1\} \to M$ from the disc satisfies an upper bound on the derivative at any point $p\in \D$ 
depending only on $h(p)\in M$. Hence, $M$ is Kobayashi hyperbolic and its universal
covering is the disc. Since superminimal surfaces in $H^4$ lift isometrically to holomorphic Legendrian curves 
in $(\Omega,\tilde\xi)$, the contact structure $\tilde\xi$ is hyperbolic. 
(Note that $\Omega$ itself is not Kobayashi hyperbolic since the fibres of $\phi_2:\Omega\to\B$ 
are rational curves.) The same argument applies to domains in any space-form $X$ since its universal 
metric covering space is $H^4$; see \cite[Theorem 4.1]{doCarmo1992}.
\end{proof}
\end{example}

%
%
\subsection*{Acknowledgements}
This research is supported by research program P1-0291 and grant J1-9104
from ARRS, Republic of Slovenia, and by the Stefan Bergman Prize 2019 from the 
American Mathematical Society. I wish to thank Antonio Alarc\'on, Joel Fine, Finnur L\'arusson,
and Francisco Urbano for helpful discussions on this subject.



\begin{thebibliography}{10}

\bibitem{AlarconDrinovecForstnericLopez2015PLMS}
A.~Alarc{\'o}n, B.~Drinovec~Drnov{\v{s}}ek, F.~Forstneri\v{c}, and F.~J.
  L{\'o}pez.
\newblock Every bordered {R}iemann surface is a complete conformal minimal
  surface bounded by {J}ordan curves.
\newblock {\em Proc. Lond. Math. Soc. (3)}, 111(4):851--886, 2015.

\bibitem{AlarconForstneric2019IMRN}
A.~Alarc{\'o}n and F.~Forstneri{\v c}.
\newblock Darboux charts around holomorphic {L}egendrian curves and
  applications.
\newblock {\em Int. Math. Res. Not. IMRN}, (3):893--922, 2019.

\bibitem{AlarconForstneric2019JAMS}
A.~Alarc{\'o}n and F.~Forstneri\v{c}.
\newblock New complex analytic methods in the theory of minimal surfaces: a
  survey.
\newblock {\em J. Aust. Math. Soc.}, 106(3):287--341, 2019.

\bibitem{AlarconForstneric2020RMI}
A.~Alarc{\'o}n and F.~Forstneri\v{c}.
\newblock {The Calabi-Yau problem for Riemann surfaces with finite genus and
  countably many ends}.
\newblock {\em arXiv e-prints}, 2019.
\newblock \url{https://arxiv.org/abs/1904.08015}. {\em Rev. Mat. Iberoam.}, to
  appear.
  
\bibitem{AlarconForstnericLarusson2019X}
A.~Alarc{\'o}n, F.~Forstneri{\v c}, and F.~L{\'a}russon.
\newblock {Holomorphic Legendrian curves in $\mathbb{CP}^3$ and superminimal
  surfaces in $\mathbb S^4$}.
\newblock {\em arXiv e-prints}, Oct 2019.
\newblock \url{https://arxiv.org/abs/1910.12996}.

\bibitem{AlarconForstnericLopez2017CM}
A.~Alarc{\'o}n, F.~Forstneri\v{c}, and F.~J. L{\'o}pez.
\newblock Holomorphic {L}egendrian curves.
\newblock {\em Compos. Math.}, 153(9):1945--1986, 2017.

\bibitem{AlarconForstnericLopez2020MAMS}
A.~Alarc\'{o}n, F.~Forstneri\v{c}, and F.~L\'{o}pez.
\newblock New {C}omplex {A}nalytic {M}ethods in the {S}tudy of
  {N}on-{O}rientable {M}inimal {S}urfaces in {$\Bbb R^n$}.
\newblock {\em Mem. Amer. Math. Soc.}, 264(1283) (2020).

\bibitem{AtiyahHitchinSinger1978}
M.~F. Atiyah, N.~J. Hitchin, and I.~M. Singer.
\newblock Self-duality in four-dimensional {R}iemannian geometry.
\newblock {\em Proc. Roy. Soc. London Ser. A}, 362(1711):425--461, 1978.

\bibitem{BairdWood2003}
P.~Baird and J.~C. Wood.
\newblock {\em Harmonic morphisms between {R}iemannian manifolds}, volume~29 of
  {\em London Mathematical Society Monographs. New Series}.
\newblock The Clarendon Press, Oxford University Press, Oxford, 2003.

\bibitem{Besse2008}
A.~L. Besse.
\newblock {\em Einstein manifolds}.
\newblock Classics in Mathematics. Springer-Verlag, Berlin, 2008.
\newblock Reprint of the 1987 edition.

\bibitem{Biquard2002}
O.~Biquard.
\newblock M\'{e}triques autoduales sur la boule.
\newblock {\em Invent. Math.}, 148(3):545--607, 2002.

\bibitem{BoltonWoodward2000}
J.~Bolton and L.~M. Woodward.
\newblock Higher singularities and the twistor fibration {$\pi\colon{\bf C}{\rm
  P}^3\to S^4$}.
\newblock {\em Geom. Dedicata}, 80(1-3):231--245, 2000.

\bibitem{Boruvka1928ii}
O.~{Bor$\mathring{\mathrm u}$vka}.
\newblock {Sur une classe de surfaces minima plong\'ees dans un espace \`a cinq
  dimensions \`a courbure constante.}
\newblock {\em {C. R. Acad. Sci., Paris}}, 187:1271--1273, 1928.

\bibitem{Boruvka1928i}
O.~{Bor$\mathring{\mathrm u}$vka}.
\newblock {Sur une classe de surfaces minima plong\'ees dans un espace \`a
  quatre dimensions \`a courbure constante.}
\newblock {\em {C. R. Acad. Sci., Paris}}, 187:334--336, 1928.

\bibitem{Bryant1982JDG}
R.~L. Bryant.
\newblock Conformal and minimal immersions of compact surfaces into the
  {$4$}-sphere.
\newblock {\em J. Differential Geom.}, 17(3):455--473, 1982.

\bibitem{Calabi1967}
E.~Calabi.
\newblock Minimal immersions of surfaces in {E}uclidean spheres.
\newblock {\em J. Differential Geometry}, 1:111--125, 1967.

\bibitem{ChenBangYen2017}
B.-Y. Chen.
\newblock {\em Differential geometry of warped product manifolds and
  submanifolds}.
\newblock World Scientific Publishing Co. Pte. Ltd., Hackensack, NJ, 2017.
\newblock With a foreword by Leopold Verstraelen.

\bibitem{Chern1966BAMS}
S.~S. Chern.
\newblock The geometry of {$G$}-structures.
\newblock {\em Bull. Amer. Math. Soc.}, 72:167--219, 1966.

\bibitem{Chern1970B}
S.-s. Chern.
\newblock On minimal spheres in the four-sphere.
\newblock In {\em Studies and {E}ssays ({P}resented to {Y}u-why {C}hen on his
  60th {B}irthday, {A}pril 1, 1970)}, pages 137--150. Math. Res. Center, Nat.
  Taiwan Univ., Taipei, 1970.

\bibitem{Chern1970A}
S.~S. Chern.
\newblock On the minimal immersions of the two-sphere in a space of constant
  curvature.
\newblock In {\em Problems in analysis ({L}ectures at the {S}ympos. in honor of
  {S}alomon {B}ochner, {P}rinceton {U}niv., {P}rinceton, {N}.{J}., 1969)},
  pages 27--40. 1970.

\bibitem{Derdzinski1983}
A.~Derdzi\'{n}ski.
\newblock Self-dual {K}\"{a}hler manifolds and {E}instein manifolds of
  dimension four.
\newblock {\em Compositio Math.}, 49(3):405--433, 1983.

\bibitem{Calabi1965Conjecture}
K. Diagaku, S.\ K. Kenky$\bar{\rm u}$jo. 
{\em Proceedings of the {U}nited {S}tates-{J}apan {S}eminar in {D}ifferential
  {G}eometry, {K}yoto, {J}apan, 1965}.
\newblock Nippon Hyoronsha Co., Ltd., Tokyo, 1966.

\bibitem{doCarmo1992}
M.~P. do~Carmo.
\newblock {\em Riemannian geometry}.
\newblock Mathematics: Theory \& Applications. Birkh\"auser Boston, Inc.,
  Boston, MA, 1992.
\newblock Translated from the second Portuguese edition by Francis Flaherty.

\bibitem{DonaldsonFine2006}
S.~Donaldson and J.~Fine.
\newblock Toric anti-self-dual 4-manifolds via complex geometry.
\newblock {\em Math. Ann.}, 336(2):281--309, 2006.

\bibitem{EellsSalamon1985}
J.~Eells and S.~Salamon.
\newblock Twistorial construction of harmonic maps of surfaces into
  four-manifolds.
\newblock {\em Ann. Scuola Norm. Sup. Pisa Cl. Sci. (4)}, 12(4):589--640
  (1986), 1985.

\bibitem{Eisenhart1912}
L.~P. Eisenhart.
\newblock Minimal {S}urfaces in {E}uclidean {F}our-{S}pace.
\newblock {\em Amer. J. Math.}, 34(3):215--236, 1912.

\bibitem{Fine2008}
J.~Fine.
\newblock Toric anti-self-dual {E}instein metrics via complex geometry.
\newblock {\em Math. Ann.}, 340(1):143--157, 2008.

\bibitem{Forstneric2020Merg}
F.~Forstneri{\v c}.
\newblock Mergelyan approximation theorem for holomorphic {L}egendrian curves.
\newblock {\em arXiv e-prints}, 2020.
\newblock \url{https://arxiv.org/abs/2001.04379}.

\bibitem{Forstneric2017JGEA}
F.~Forstneri\v{c}.
\newblock Hyperbolic complex contact structures on {$\Bbb C^{2n+1}$}.
\newblock {\em J. Geom. Anal.}, 27(4):3166--3175, 2017.

\bibitem{Friedrich1984}
T.~Friedrich.
\newblock On surfaces in four-spaces.
\newblock {\em Ann. Global Anal. Geom.}, 2(3):257--287, 1984.

\bibitem{Friedrich1997}
T.~Friedrich.
\newblock On superminimal surfaces.
\newblock {\em Arch. Math. (Brno)}, 33(1-2):41--56, 1997.

\bibitem{FriedrichKurke1982}
T.~Friedrich and H.~Kurke.
\newblock Compact four-dimensional self-dual {E}instein manifolds with positive
  scalar curvature.
\newblock {\em Math. Nachr.}, 106:271--299, 1982.

\bibitem{Gauduchon1987A}
P.~Gauduchon.
\newblock La correspondance de {B}ryant.
\newblock Number 154-155, pages 10, 181--208, 351 (1988). 1987.
\newblock Th\'{e}orie des vari\'{e}t\'{e}s minimales et applications
  (Palaiseau, 1983--1984).

\bibitem{Gauduchon1987B}
P.~Gauduchon.
\newblock Les immersions super-minimales d'une surface compacte dans une
  vari\'{e}t\'{e} riemannienne orient\'{e}e de dimension {$4$}.
\newblock Number 154-155, pages 9, 151--180, 351 (1988). 1987.
\newblock Th\'{e}orie des vari\'{e}t\'{e}s minimales et applications
  (Palaiseau, 1983--1984).

\bibitem{GrahamLee1991}
C.~R. Graham and J.~M. Lee.
\newblock Einstein metrics with prescribed conformal infinity on the ball.
\newblock {\em Adv. Math.}, 87(2):186--225, 1991.

\bibitem{HeSchramm1993}
Z.-X. He and O.~Schramm.
\newblock Fixed points, {K}oebe uniformization and circle packings.
\newblock {\em Ann. of Math. (2)}, 137(2):369--406, 1993.

\bibitem{Hitchin1974}
N.~Hitchin.
\newblock Compact four-dimensional {E}instein manifolds.
\newblock {\em J. Differential Geometry}, 9:435--441, 1974.

\bibitem{Hitchin1995}
N.~J. Hitchin.
\newblock Twistor spaces, {E}instein metrics and isomonodromic deformations.
\newblock {\em J. Differential Geom.}, 42(1):30--112, 1995.

\bibitem{Kobayashi2005}
S.~Kobayashi.
\newblock {\em Hyperbolic manifolds and holomorphic mappings. An introduction}.
\newblock World Scientific Publishing Co. Pte. Ltd., Hackensack, NJ, second
  edition, 2005.

\bibitem{Kommerell1897}
K.~{Kommerell}.
\newblock {Die Kr\"ummung der zweidimensionalen Gebilde im ebenen Raum von vier
  Dimensionen}.
\newblock {T\"ubingen. 53 S. \(8^\circ\) (1897)}, 1897.

\bibitem{Kommerell1905}
K.~Kommerell.
\newblock Riemannsche {F}l\"{a}chen im ebenen {R}aum von vier {D}imensionen.
\newblock {\em Math. Ann.}, 60(4):548--596, 1905.

\bibitem{LeBrun1995IJM}
C.~LeBrun.
\newblock Fano manifolds, contact structures, and quaternionic geometry.
\newblock {\em Internat. J. Math.}, 6(3):419--437, 1995.

\bibitem{LeBrunSalamon1994IM}
C.~LeBrun and S.~Salamon.
\newblock Strong rigidity of positive quaternion-{K}\"ahler manifolds.
\newblock {\em Invent. Math.}, 118(1):109--132, 1994.

\bibitem{MontielUrbano1997}
S.~Montiel and F.~Urbano.
\newblock Second variation of superminimal surfaces into self-dual {E}instein
  four-manifolds.
\newblock {\em Trans. Amer. Math. Soc.}, 349(6):2253--2269, 1997.

\bibitem{Myers1941}
S.~B. Myers.
\newblock Riemannian manifolds with positive mean curvature.
\newblock {\em Duke Math. J.}, 8:401--404, 1941.

\bibitem{Osserman1986}
R.~Osserman.
\newblock {\em A survey of minimal surfaces}.
\newblock Dover Publications, Inc., New York, second edition, 1986.

\bibitem{Penrose1967}
R.~Penrose.
\newblock Twistor algebra.
\newblock {\em J. Mathematical Phys.}, 8:345--366, 1967.

\bibitem{PenroseMacCallum1973}
R.~Penrose and M.~A.~H. MacCallum.
\newblock Twistor theory: an approach to the quantisation of fields and
  space-time.
\newblock {\em Phys. Rep.}, 6C(4):241--315, 1973.

\bibitem{Ratcliffe2006}
J.~G. Ratcliffe.
\newblock {\em Foundations of hyperbolic manifolds}, volume 149 of {\em
  Graduate Texts in Mathematics}.
\newblock Springer, New York, second edition, 2006.

\bibitem{Salamon1982}
S.~Salamon.
\newblock Quaternionic {K}\"{a}hler manifolds.
\newblock {\em Invent. Math.}, 67(1):143--171, 1982.

\bibitem{Salamon1983}
S.~Salamon.
\newblock Topics in four-dimensional {R}iemannian geometry.
\newblock In {\em Geometry seminar ``{L}uigi {B}ianchi\/'' ({P}isa, 1982)},
  volume 1022 of {\em Lecture Notes in Math.}, pages 33--124. Springer, Berlin,
  1983.

\bibitem{Salamon1985}
S.~Salamon.
\newblock Harmonic and holomorphic maps.
\newblock In {\em Geometry seminar ``{L}uigi {B}ianchi'' {II}---1984}, volume
  1164 of {\em Lecture Notes in Math.}, pages 161--224. Springer, Berlin, 1985.

\bibitem{Sergeev2018}
A.~G. Sergeev.
\newblock Twistor geometry and gauge fields.
\newblock {\em Trans. Moscow Math. Soc.}, 79:135--175, 2018.

\bibitem{Shen1996}
Y.~Shen.
\newblock The {R}iemannian geometry of superminimal surfaces in complex space
  forms.
\newblock {\em Acta Math. Sinica (N.S.)}, 12(3):298--313, 1996.
\newblock A Chinese summary appears in Acta Math. Sinica {{\bf{4}}0} (1997),
  no. 1, 158.

\bibitem{Shen1993}
Y.~B. Shen.
\newblock On conformal superminimal immersions of surfaces into {$\bold C{\rm
  P}^n$}.
\newblock In {\em Geometry and global analysis ({S}endai, 1993)}, pages
  457--463. Tohoku Univ., Sendai, 1993.

\bibitem{Veblen1955}
O. Veblen. 
{\em Geometry of complex domains, a seminar conducted by {P}rofessors {O}swald
  {V}eblen and {J}ohn von {N}eumann, 1935--36}.
\newblock The Institute for Advanced Study, Princeton, N. J., 1955.
\newblock Lectures by O. Veblen, and J. W. Givens, Notes by A. H. Taub and J.
  W. Givens, Rev. ed.

\bibitem{WardWells1990}
R.~S. Ward and R.~O. Wells, Jr.
\newblock {\em Twistor geometry and field theory}.
\newblock Cambridge Monographs on Mathematical Physics. Cambridge University
  Press, Cambridge, 1990.

\bibitem{Wood1987}
J.~C. Wood.
\newblock Twistor constructions for harmonic maps.
\newblock In {\em Differential geometry and differential equations ({S}hanghai,
  1985)}, volume 1255 of {\em Lecture Notes in Math.}, pages 130--159.
  Springer, Berlin, 1987.

\bibitem{Yau1977}
S.~T. Yau.
\newblock Calabi's conjecture and some new results in algebraic geometry.
\newblock {\em Proc. Nat. Acad. Sci. U.S.A.}, 74(5):1798--1799, 1977.

\bibitem{Yau1978}
S.~T. Yau.
\newblock On the {R}icci curvature of a compact {K}\"{a}hler manifold and the
  complex {M}onge-{A}mp\`ere equation. {I}.
\newblock {\em Comm. Pure Appl. Math.}, 31(3):339--411, 1978.

\bibitem{Yau2000AMS}
S.-T. Yau.
\newblock Review of geometry and analysis.
\newblock In {\em Mathematics: frontiers and perspectives}, pages 353--401.
  Amer. Math. Soc., Providence, RI, 2000.

\bibitem{Yau2000AJM}
S.-T. Yau.
\newblock Review of geometry and analysis.
\newblock {\em Asian J. Math.}, 4(1):235--278, 2000.
\newblock Kodaira's issue.

\end{thebibliography}


\vspace*{5mm}
\noindent Franc Forstneri\v c

\noindent Faculty of Mathematics and Physics, University of Ljubljana, Jadranska 19, SI--1000 Ljubljana, Slovenia, and 

\noindent 
Institute of Mathematics, Physics and Mechanics, Jadranska 19, SI--1000 Ljubljana, Slovenia

\noindent e-mail: {\tt franc.forstneric@fmf.uni-lj.si}

\end{document}